\definecolor{webred}{rgb}{0.75,0,0}
\definecolor{webgreen}{rgb}{0,0.75,0}
\definecolor{refkey}{gray}{0.75}
\numberwithin{equation}{section}
\date{\today}
\def\captionof#1#2{{\def\@captype{#1}#2}}
\newtheorem{lemma}{Lemma}[section]
\newtheorem{definition}{Definition}[section]
\newtheorem{proposition}{Proposition}[section]
\newtheorem{remark}{\bf Remark}
\newtheorem{theorem}{\bf Theorem}[section]
\def\bgneqnn{\begin{equation}}
\def\endeqnn{\end{equation}}
\def\bgneqy{\begin{eqnarray}}
\def\endeqy{\end{eqnarray}}
\def\bgneqy*{\begin{eqnarray*}}
\def\endeqy*{\end{eqnarray*}}
\def\text{\mbox}
\def\bgneqy*{\begin{eqnarray*}}
\def\endeqy*{\end{eqnarray*}}
\def\qed{\hfill$\blacksquare$\par\bigskip}
\newcounter{tablegroup}
\newcounter{subtable}[tablegroup]
\newcommand{\handletables}
\title{Analysis of a system modelling the interaction between the motion of piston-spring and a viscous gas }
\author[Chebbi, M\'acha, Ne\v casov\'a]{Sabrine Chebbi$^\spadesuit$ \and V\'aclav M\'acha$^\spadesuit$ \and \v S\'arka Ne\v casov\'a$^\spadesuit$}
\begin{document}

{ \abstract We are concerned with a one dimensional flow of a compressible fluid which may be seen as a simplification of the flow of fluid in a long thin pipe. We assume that the pipe is on one side ended by a spring. The other side of the pipe is let open -- there we assume either  inflow or outflow boundary conditions. Such situation can be understood as a toy model for human lungs. We tackle the question of uniqueness and existence of a strong solution for a system modelling the above process, special emphasis is laid upon the estimate of the maximal time of existence. }

\maketitle
\centerline{\scshape }

\begin{center}
$\spadesuit$: Institute of Mathematics of the Czech Academy of Sciences,\\ \v Zitn\'a 25, 11567 Praha 1\\
email:sabrine.chebbi@fst.utm.tn, macha@math.cas.cz, matus@math.cas.cz
\end{center}


%
%
%
%
%
%
%
\section{Introduction}
The 1D flow of a Newtonian fluid is a reasonable simplification of a complex problem because it allows us
to deduce the existence and uniqueness of smooth solution – this is a result which might be later used in many
applications from numerical simulations to control theory. This particular paper deals with boundary conditions
which might be seen as a toy model of human lungs – we assume there is a long thin pipe which is on one side
open and on the other side there is a piston on a spring and damper. The open part of the boundary is described
by either inflow boundary condition (inspiration) or outflow boundary condition (expiration). Here we would
like to mention that it would be interesting to take into account other kind of inflow-outflow boundary condition
– the most appropriate for our intentions would be a version of do-nothing conditions (a very nice paper about
such condition for incompressible flow is due to Bathory \cite{bathory} ). It is of interest that such boundary condition for
the compressible flow has not been stated yet.\\

The study of one dimensional compressible flows dates back to names such as Antontsev, Kazhikhov, Mohakhov, Ducomet et al., Shelukhin, Straskraba, Zlotnik (\cite{AnMoKa}, \cite{kazhikhov}, \cite{KaSh}) or Kawohl \cite{kawohl} on the western hemisphere.
The one dimensional fluid structure interaction (in particular a movement of a piston inside pipe) has been tackled
for example by Shelukhin in \cite{shelukhin} who considered thermally conducting piston inside a fluid, the same situation
with insulated piston was treated in \cite{FMNT} (for the isentropic case we refer to \cite{1}, the large time behavior is examined
in \cite{14}) and such piston inside a pipe with inflow boundary conditions was considered by Maity, Takahashi and
Tucsnak \cite{MTT}.\\

It is worthwhile to mention that the outflow boundary condition is one of the main novelty considered in this paper since, up to our knowledge, it has not been considered in one dimensional setting yet. The combination of the outflow boundary and spring is also unique and it brings questions of independent interest -- one of them is the maximal time of existence of the strong solution. Here we provide an estimate of the time of existence which comes from the energy inequality.\\

The model describing the flow of the isentropic compressible viscous gas in a one-dimensional domain is given as follows 
 \begin{equation}\label{Mod}
    \left \{ \begin{array}{lrl}
\rho_t(t,x)+(\rho(t,x)u(t,x))_x=0, \hspace{1.6cm} \ \text{for} \ t\geq 0, x\in (0,b(t)),  \ \ \  \text{\textit{(The continuity equation)}} \\
(\rho u)_t+ (\rho u^2)_x=(\mu u_x)_x - P_x(\rho), \hspace{1.2cm}  \text{for} \  t\geq 0, \ x\in (0,b(t)).\\
   \end{array}\right.
 \end{equation}
The unknown $u:[0,\infty)\times (0,b(t))\to \mathbb R$, $\rho:[0,\infty)\times (0,b(t))\to (0,\infty)$ represent the velocity and density of the fluid. We assume an isotropic flow, i.e. there is $\gamma>1$ such that \begin{equation}\label{pressure.law}P(\rho)=\rho^\gamma.\end{equation}
Further, the fluid is assumed to be Newtonian, i.e.,  the viscosity $\mu\in \mathbb R$ is constant. 

The system is considered on a moving domain -- the value $b(t)$ is the third (and last) unknown and it is governed by a second order ODE containing also the boundary conditions. Namely, the position of a piston, whose mass is $1$ and which is on a spring whose stiffness is $k>0$ and whose damping coefficient is  $l>0$, is governed by 
 \begin{equation}\label{M1}
  \ddot{b}(t) +k(b(t)-b_b)+l\dot{b}(t)= P(\rho(t,b(t))- \mu u_x(t,b(t)),
 \end{equation}
where $b_b \in \mathbb{R}$ is the equilibrium position of the spring. We remark that the above equations have  sense as far as $b(t)$ is a positive number otherwise the domain is empty. 
The velocity of the piston coincides with the gas velocity near the piston, \footnote{In the paper we consider the continuity of the gas velocity with the piston velocity through the boundary. It can be considered also just continuity  in the normal direction and add the Navier type of boundary condition.} i.e.,
      \begin{equation}\label{M2}
          u(t,b(t))=\dot{b}(t).
      \end{equation}
The behavior of the unknowns on the other side of the domain is described either by the inflow boundary condition
\begin{equation} \label{M4}
    u(t,0)= u_{in} (t), \ 
    \ \ \rho(t,0)=\rho_{in}(t),\ \mbox{for }t\in [0,T^{\star}),
\end{equation}
for some $T^{\star}>0$, or by the outflow boundary condition
\begin{equation}
u(t,0) = u_{out}(t)\ \mbox{for }t\in [T^{\star},T].
\end{equation}
 The system is endowed with initial conditions for the velocity, density and initial position of the piston 
           \begin{equation} \label{M3}
             \begin{array}{lrl}
(u(0,x),\rho(0,x), b(0))=(u_0(x), \rho_0(x), b_0).
       \end{array}
        \end{equation}
\begin{remark}\label{rem:1}
 We assume two types of boundary conditions, namely $$\Gamma_{in} =\{t\in \mathbb{R}_{+}, \  u(t,0)=u_{in} (t)> 0\},$$
 and 
 $$\Gamma_{out}=\{t\in \mathbb{R}_{+}, \  u(t,0)=u_{out} (t)\leq 0\} $$
 (the outer normal vector is equal to $-1$ in 1D ). These two cases are examined separately and each of them is of independent interest. The first case is treated similarly to \cite{MTT} (with certain modification). The second case deserves more attention and it can be understood as the main contribution of this paper -- the difficulty arises as $\rho_{out}$ can not be prescribed, rather than that, it is just a value of an unknown $\rho$ at one endpoint. In our setting, $\Gamma_{in} = [0,T*)$ and $\Gamma_{out} = [T,T^*]$ for some $0<T^*<T<\infty$.
    \end{remark}
\noindent To sum up, we treat  the following system
\begin{equation}\label{11}
                \left \{ \begin{array}{lrl}
\rho_t+(\rho u)_x=0, \hspace{6cm} \   \ (t,x)\in \Omega_t,  \ \ \  \\
(\rho u)_t+ (\rho u^2)_x=(\mu u_x)_x - P_x(\rho), \hspace{3.5cm}    (t, x)\in \Omega_t, \\
\ddot{b}(t)+l\dot{b}(t) +K(b(t)-b_b)= (P(\rho)-\mu u_x)(t,b(t)), \hspace{1cm} t\in (0,T),\\
u(t,b(t))=\dot{b}(t), \hspace{4.8cm} t\in (0,T) \\
u(t,0)=u_{in}(t)>0, \ \rho(t,0)=\rho_{in}(t)>0, \hspace{1cm} t\in [0,T^{\star}),\\
u(t,0)=u_{out}(t)\leq 0, \hspace{4.1cm} t\in [T^{\star},T],\\
u(0,x)=u_0(x), \ \rho(0,x)=\rho_0(x), \hspace{2.3cm} x\in (0,b(t)), \\
b(0)=b_0, \ \dot{b}(0)=b_1,\\
b(T^{\star})=b^{\star}, \dot{b}(T^{\star})=b_1^{\star},
       \end{array}\right.
        \end{equation}
such that  $$\Omega_t=\{(t,x)\in [0,T]\times (0,b(t))\}.$$

The rest of the article is organized as follows. 
In section \ref{s1}, we introduce the necessary variable changes that fix the moving domain and we decompose our system with respect to the boundary type condition (Inflow and Outflow). The notion of strong solution is also specified there. The inflow boundary condition is then treated in Section \ref{s2} where we state the local-in-time and global-in-time existence of the strong solution -- only a sketch of the proofs are provided due to similarities with \cite{MTT}. Finally, the section \ref{25} is devoted to the outflow boundary condition. Here we provide the local-in-time existence and we give an estimate on time for which there exists the strong solution. Namely, we claim that the solution exists till the contact between the piston and the free wall ($b(t) = 0$) and we give an estimate on time for which this situation does not appear.
     \section{Lagrangian coordinates} \label{s1}
     In this section, we introduce the Lagrangian mass change of coordinates to the system \eqref{11}. 
     It is worth to mention that the system in the Lagrangian mass coordinate is also on time dependent domain. This is the consequence of the inflow-outflow boundary condition. Therefore, the section is concluded with a transformation of Lagrangian coordinates to a time-independent domain.
     
     We denote by $\chi$ the new space coordinate given as follows  
     $$ y=\chi(t,x), \  \chi(t,x)=\int_{b(t)}^x \rho(t,s)\, {\rm d}s=-\int_{x}^{b(t)} \rho(t,s)\, {\rm d}s,  \  \  (t,x) \in \Omega_t.$$
     The variable $y$ ranges from $-\eta(t)$ to $0$ where $\eta(t)$ is defined as
     $$
     \eta(t)= \int_0^{b(t)}\rho(t,x)\, {\rm d}x.
     $$
     We note that 
\begin{multline*}
    \partial_t \eta(t)= -\frac{d}{dt} \left(\int^{b(t)}_0 \rho(t,y)\, {\rm d}y\right)= -\int^{b(t)}_0 \rho_t(t,y)\, {\rm d}y -\dot{b}(t) \rho(t,b(t)) \\  = -\int^{b(t)}_0 (\rho u)_x(t,y)\, {\rm d}y -\dot{b}(t) \rho(t,b(t))
    = \rho(t,b(t))u(t,b(t))-\dot{b}\rho(t,b(t))+\rho(t,0)u(t,0). \\ 
    \end{multline*}
Consequently
    \begin{equation}\label{eta.out}
 \partial_t \eta(t) = \rho(t,0)u(t,0)\neq 0
 \end{equation}
and $\eta $ is not a constant in time, therefore the Lagrangian domain $(-\eta(t),0)$ is still moving in time.
We distinguish two cases which will be treated separately (see Remark \ref{rem:1}):
     $$\eta_{in}(t)=\int_0^{b_0} \rho_0(x)\, {\rm d}x +\int_{T^{\star}}^t u_{in}(\tau) \rho_{in}(\tau)\, {\rm d}\tau, \ \  t\in [0,T^{\star}),$$
     and 
     \begin{equation*}\eta_{out}(t)=\int_{T^{\star}}^{b(T^{\star})} \rho(T^{\star},x)\, {\rm d}x +\int_{T^{\star}}^t u_{out}(\tau) \rho(\tau,0)\, {\rm d}\tau, \ \ t\in [T^{\star},T].\end{equation*}  
     \begin{remark}
        Note that  $\eta_{out}$ is a generic unknown as $\rho(s,0)$ is not a prior given and it is a part of solution.  
     \end{remark}

     We set
     $$\Tilde{\Omega}_{T^{\star}} = \{(t,x), \ 0<t<T^{\star}  \ and \ x\in (-\eta_{in}(t),0) \}, $$
     $$\Tilde{\Omega}_{T} = \{(t,x), \ T^{\star}<t<T \  and \ x\in (-\eta_{out}(t),0) \}.$$
     For each $t\in [0,T]$, we denote by $\chi^{-1}(t,.)$ the inverse map of $\chi(t,.)$.\\
The specific volume $v$ in mass Lagrangian coordinate is defined as an inverse of the density, namely \begin{equation*}
                \left \{ \begin{array}{lrl}
                v(t,y)=\frac{1}{\rho(t,\chi^{-1}(t,y))}, \hspace{0.5cm} t\in [0,T], \ y\in [-\eta(t),0]
                \\
                \rho(t,x)=\frac{1}{v(t,\chi(t,x))}, \hspace{0.7cm} t\in [0,T], \ x\in [0,b(t)].
       \end{array}\right.
        \end{equation*}
Similarly, the velocity field in Lagrangian mass coordinates reads 
    \begin{equation*}
        \left \{ \begin{array}{lrl}
        \tilde{u}(t,y)= u(t,\chi^{-1}(t,y)),   \hspace{0.5cm} t\in [0,T], \ y\in [-\eta(t),0]
        \\
         u(t,x)=\tilde{u}(t,\chi(t,x)), \hspace{0.9cm} t\in [0,T], \ x\in [0,b(t)].
   \end{array}\right.
    \end{equation*}        
Consequently, we have new unknowns $v$, $\tilde u$ and $b$ and we establish a new function for pressure (see \eqref{pressure.law})
\begin{equation*}
 q(v)(t,y):=P\left(\frac{1}{\rho(t,x)}\right) = v^{-\gamma}(t,y),
\end{equation*}
With the above notation the system \eqref{11} can be written with respect to the input flow in the form
 \begin{equation}\label{55}
    \left \{ \begin{array}{lrl}
v_t-\tilde{u}_{y}=0, \ \hspace{1.7cm} (t,x)\in \Tilde{\Omega}_{T^{\star}},\\
\tilde{u}_t+q_{y}(v)=\mu \left(\frac{\tilde{u}_{y}}{v}\right)_{y} \ \hspace{0.3cm} (t,x)\in \Tilde{\Omega}_{T^{\star}},\\
\ddot{b}(t)+l\dot{b}(t) +K(b(t)-b_b)= \left(q(v)-\mu \frac{\tilde{u}_{y}}{v}\right)(t,0), \hspace{1cm} t\in (0,T),\\
\tilde{u}(t,0)= \dot{b}(t), \ \tilde{u}(t,-\eta_{in}(t))=\tilde{u}_{in}(t) >0, \ \hspace{2.2cm} t\in (0,T^{\star}),\\
v(t,-\eta_{in}(t))=\frac{1}{\rho_{in}(t)}, \hspace{5.2cm} t\in (0,T^{\star}),\\
\tilde{u}(0,y)=\tilde{u}_0(y), \ v(0,y)=\frac{1}{\rho_0(x)}, \ \hspace{3.5cm} y\in (-\eta_{in}(t),0),\\
       \end{array}\right.
        \end{equation}
        and in the following form with respect to the output flow
      \begin{equation}\label{0101}
                \left \{ \begin{array}{lrl}
v_t-\tilde{u}_{y}=0, \ \hspace{1.7cm} (t,x)\in \Tilde{\Omega}_{T},\\
\tilde{u}_t+q_{y}(v)=\mu \left(\frac{\tilde{u}_{y}}{v}\right)_{y}, \ \hspace{0.3cm} (t,x)\in \Tilde{\Omega}_{T},\\
\ddot{b}(t)+l\dot{b}(t) +K(b(t)-b_b)= (q(v)-\mu \frac{\tilde{u}_{y}}{v})(t,0), \hspace{1cm} t\in (0,T),\\
\tilde{u}(t,0)= \dot{b}(t), \ \tilde{u}(t,-\eta_{out}(t))=\tilde{u}_{out}(t)<0, \ \hspace{2cm} t\in (T^{\star},T),\\
v(t,-\eta_{out}(t)), \ \  \ \  \text{unknown}, \hspace{4.5cm} t\in (T^{\star},T),\\
\tilde{u}(T^{\star},y)=\tilde{u}^{\star}(y),\ v(T^{\star},y)=v^{\star}(y),  \hspace{3cm} y\in (-\eta_{out}(t),0).\\
       \end{array}\right.
        \end{equation}   
We obtained a system in which the equation of the viscous gas holds in a domain which still depends on time -- this is a consequence of the inflow/outflow boundary conditions which do not preserve the total mass. We continue as in \cite{MTT}, and we introduce a second change variables in order  to rewrite the system \eqref{0101} in a fixed domain.
        To this aim we define
      \begin{equation}\label{cc}
            z= \Gamma(t,y)= \left \{ \begin{array}{lrl}           
                \frac{y}{-\eta_{in}(t)}, \ \ y\in [-\eta_{out},0] \  \text{and}  \ t\in [0,T^{\star}), \\
               \frac{y}{-\eta_{out}(t)}, \ \ y\in [-\eta_{out},0] \  \text{and}  \ t\in [T^{\star},T].
       \end{array}\right.
        \end{equation}
        It is easy to verify that, for every $t\in [T^{\star},T]$,  $\Gamma(t,.) $ is one to one from   $[-\eta_{out}(t),0]$ onto $[0,1]$. We set $z=\Gamma(t,y)$, the inverse of this map is given by 
         \begin{equation}\label{cc}
          y=  \Gamma^{-1}(t,z)=\left \{ \begin{array}{lrl}
            -\eta_{in}(t) z, \  z \in [0,1], \ t\in [0,T^{\star}), \\
            -\eta_{out}(t)z, \ \  \  z \in [0,1], \ t\in [T^{\star},T],
       \end{array}\right.
        \end{equation}
        The new specific volume $\tilde{v}$ and the velocity field with respect to the above coordinates have the form
         \begin{equation}\label{c}
                \left \{ \begin{array}{lrl}
                \tilde{v}(t,z)=v(t,\Gamma^{-1}(t,z)), \hspace{0.5cm} t\in [T^{\star},T], \ z\in [0,1],
                \\
                v(t,y)=\tilde{v}(t,\Gamma(t,z)), \hspace{0.7cm} t\in [T^{\star},T], \ y\in [-\eta_{out}(t),0].
       \end{array}\right.
        \end{equation}

        \begin{equation}\label{b}
                \left \{ \begin{array}{lrl}
                \b{u}(t,z)= \tilde{u}(t,\Gamma^{-1}(t,z)),   \hspace{0.5cm} t\in [T^{\star},T], \ z \in [0,1],
                \\
             \tilde{u}(t,y)=\b{u}(t,\Gamma(t,y)) \hspace{0.9cm} t\in [T^{\star},T], \ y\in [-\eta_{out}(t),0].
       \end{array}\right.
        \end{equation}
        With the above new notation the system \eqref{55} can be written in the form 
         \begin{equation}\label{0000}
                \left \{ \begin{array}{lrl}
\tilde{v}_t+\beta \tilde{v}_{z}-\alpha \b{u}_{z}=0, \ \hspace{2.6cm} (t,z)\in \underline{\Omega}_{T^{*}},\\
\b{u}_t+\beta \b{u}_{z} =\mu \alpha \left(\alpha \frac{\b{u}_{z}}{\tilde{v}}\right)_{z} -\alpha [q(\tilde{v})]_{z}, \ \hspace{0.4cm} (t,z)\in \underline{\Omega}_{T^{*}},\\
\ddot{b}(t)+l\dot{b}(t) +K(b(t)-b_b)= \left[ [q(\tilde{v})]-\mu \left(\alpha \frac{\b{u}_{z}}{\tilde{v}}\right) \right] (t,1) , \hspace{1cm} t\in (0,T^{\star}),\\
\b{u}(t,1)= \dot{b}(t), \ \b{u}(t,0)=\tilde{u}_{int}(t)>0, \ \hspace{3.8cm} t\in (0,T^{\star}),\\
\tilde{v}(t, 0), \  \text{prescribed} \hspace{1.5cm} t\in (0,T^{\star}),\\
\b{u}(0,z)=\b{u}_0(z),\ \hspace{2cm} z \in (0,1),\\
       \end{array}\right.
        \end{equation}
        and the system \eqref{0101} can be written in the form 
  \begin{equation}\label{000}
                \left \{ \begin{array}{lrl}
\tilde{v}_t+\beta \tilde{v}_{z}-\alpha \b{u}_{z}=0, \ \hspace{2.6cm} (t,z)\in \underline{\Omega}_{T},\\
\b{u}_t+\beta \b{u}_{z} =\mu \alpha \left(\alpha \frac{\b{u}_{z}}{\tilde{v}}\right)_{z} -\alpha [q(\tilde{v})]_{z}, \ \hspace{0.4cm} (t,z)\in \underline{\Omega}_{T},\\
\ddot{b}(t)+l\dot{b}(t) +K(b(t)-b_b)= \left[ [q(\tilde{v})]-\mu \left(\alpha \frac{\b{u}_{z}}{\tilde{v}}\right) \right] (t,1) , \hspace{1cm} t\in [T^{\star}, T),\\
\b{u}(t,1)= \dot{b}(t), \ \b{u}(t,0)=\tilde{u}_{out}(t)<0, \ \hspace{3.8cm} t\in [T^{\star},T),\\
\tilde{v}(t, 0)\ \  \ \  \text{unknown}, \hspace{1.5cm} t\in (T^{\star},T),\\
\b{u}(T^{\star},z)=\b{u}^{\star}(z),\ \hspace{2cm} z \in (0,1),\\
       \end{array}\right.
        \end{equation}          
where 
$$\underline{\Omega}_{T^{*}}=\{(t,z), \ \text{such that},\ 0<t<T^{\star}, \ 0<z <1\},$$
$$\underline{\Omega}_{T}=\{(t,z), \ \text{such that},\ T^{\star}<t<T, \ 0<z <1\},$$

and 
\begin{equation}\label{a}
                \left \{ \begin{array}{lrl}
             \alpha(t,z)  = \left\{
             \begin{array}{l}
             \frac{-1}{\eta_{in}(t)},\quad t\in [0,T^*)\\
             \frac{-1}{\eta_{out}(t)},\quad t\in [T^{\star},T]
             \end{array}\right.\ z \in [0,1],
                \\
          \beta(t,z) = \left\{\begin{array}{l}
          -z \frac{\dot{\eta}_{in}(t)}{\eta_{in}(t)},\quad t\in [0,T^*)\\
          - z \frac{\dot{\eta}_{out}(t)}{\eta_{out}(t)},\quad t\in [T^{\star},T], \end{array}\right.\ z \in [0,1].
       \end{array}\right.
        \end{equation}
        
        \begin{definition} \label{def0}
        The triple $$(\rho,u,b)\in C([0,T],L^\infty(0,b(t)))\times C([0,T],H^1(0,b(t)))\times H^2(0,T)$$
        fulfilling $\rho_t\in C([0,T],H^0(0,b(t)))$, $u_t\in C([0,T],H^0(0, b(t)))$, $u\in C([0,T],H^2(0,b(t))$ and $\frac 1\rho\in C([0,T]\times[0,b(t)])$ is a strong solution to \eqref{11}.
        \end{definition}
\begin{remark}\label{remark.sol} It is worthwhile to mention that the strong solution as defined above is smooth enough and it allows to switch arbitrarily between the formulations in different coefficients. Therefore, we are not going to provide the definition of strong solution for all formulation as we believe that the provided definition is sufficient.
\end{remark}
\section{Inflow boundary conditions}\label{s2}
The result covering the existence of strong solution on a time interval $[0,T^*]$ with $T^*$ given is (as usual) divided into two parts, where the first part yields the existence on the short time interval. This 
is given by the following theorem
\begin{theorem}\label{th1}
We assume that the variables of the system  $(u,\rho,b)$ satisfy the following assumptions for the initial and the boundary conditions
\begin{itemize}
    \item $b_b\in \mathbb R, b_1=\dot{b}(0) \in \mathbb{R},$
    \item 
$u_0(b_0)=u^b_0 \in \mathbb{R}$,  $u_0 \in H^1(0,1)$ \\ $u_{in} \in H^{1}([0,T^{\star}))$ and $u_{in}(t) >0 , \ t\in [0,T^{\star}).$  
\item
The initial density $\rho_0\in H^1(0,1)$ and $\rho_0(x)>0$  for every $x\in (0,1).$\\
 $\rho_{in}\in H^{1}([0,T^{\star}))$ and $ \rho_{in}(t)>0, \ t\in [0,T^{\star}).$
 $$\|\rho_{in}\|_{H^1([{0,T^{\star}}))} +\|u_{in}\|_{H^1({[0,T^{\star})})}\leq M,$$
$$u_{in}(t), \rho_{in}(t) \geq \frac{1}{M}, \  \  \text{for all}  \ t\in [0,T^{\star}).  $$
\item Moreover, we assume that there exists a constant $M>0$ such that 
\begin{align*}
    \|u_0\|_{H^{1}(0,1)} +\|\rho_0\|_{H^1(0,1)}+|u_{0}^b|\leq M, \\
   \frac{1}{M} \leq\rho_0(x)\leq M,\  \ x\in (0,1),\\
   \frac{1}{M}\leq b_0.
\end{align*}
\end{itemize}
Then, there exists $T_0\in (0,T^{\star})$, depending only on $M$, such that the system formed by \eqref{Mod},\eqref{M1}, \eqref{M2} with initial and boundary conditions \eqref{M4} and \eqref{M3} admits a unique strong solution in the regularity class specified in Definition \ref{def0} on $[0,T_0]$.
\end{theorem}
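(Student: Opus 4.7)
The plan is to work with the fixed-domain reformulation \eqref{0000} on the cylinder $\underline{\Omega}_{T_0}=(0,T_0)\times(0,1)$ with $T_0\in(0,T^\star)$ to be chosen small, and then translate the resulting strong solution back to Eulerian coordinates through the two changes of variables already set up in Section~\ref{s1} (this last step is allowed by Remark~\ref{remark.sol} once the solution has the regularity of Definition~\ref{def0}). The hypotheses on $(\rho_0,u_0,b_0)$ translate into uniform bounds $(\tilde v_0,\underline u_0)\in H^1(0,1)^2$, $\tilde v_0\ge 1/M$, together with $\underline u(t,0)=\tilde u_{in}(t)>0$, $\underline u(t,1)=\dot b(t)$. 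Because the inflow velocity is strictly positive, $\eta_{in}(t)=\int_0^{b_0}\rho_0+\int_0^t u_{in}\rho_{in}$ is non-decreasing and stays in a compact interval of $(0,\infty)$, so the coefficients $\alpha(t),\beta(t,z)$ defined in \eqref{a} are smooth and uniformly bounded on $[0,T_0]$ by quantities depending only on $M$; this observation is what makes the inflow case essentially parallel to \cite{MTT}.

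Next I would set up a Banach fixed-point scheme. Pick $R>0$ depending on $M$ and introduce the closed ball
\begin{equation*}
\mathcal B_{T_0,R}=\left\{(v^\sharp,u^\sharp,b^\sharp):\ \|u^\sharp\|_{L^\infty_tH^1_z\cap L^2_tH^2_z}+\|u^\sharp_t\|_{L^2_tL^2_z}+\|v^\sharp\|_{L^\infty_tH^1_z}+\|b^\sharp\|_{H^2_t}\le R,\ \tfrac1{2M}\le v^\sharp\le 2M,\ b^\sharp\ge\tfrac1{2M}\right\}.
\end{equation*}
Given $(v^\sharp,u^\sharp,b^\sharp)\in\mathcal B_{T_0,R}$, I build a new triple $(\tilde v,\underline u,b)$ by solving three decoupled subproblems in sequence: (i) the linear transport equation $\tilde v_t+\beta \tilde v_z=\alpha u^\sharp_z$ with inflow datum $\tilde v(t,0)=1/\rho_{in}(t)$ (well-posed since $\beta(t,0)=0$ and characteristics move outward from $z=0$ thanks to $u_{in}>0$); (ii) the linear parabolic equation $\underline u_t+\beta \underline u_z-\mu\alpha(\alpha\underline u_z/\tilde v)_z=-\alpha q(\tilde v)_z$ with boundary data $\underline u(t,0)=\tilde u_{in}(t)$ and $\underline u(t,1)=\dot b^\sharp(t)$, using $\tilde v$ from step (i); (iii) the ODE $\ddot b+l\dot b+K(b-b_b)=[q(\tilde v)-\mu\alpha \underline u_z/\tilde v](t,1)$ with initial data $(b_0,b_1)$. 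Step (i) propagates the $H^1$ bound and the two-sided positivity of $\tilde v$ for short times by the method of characteristics; step (ii) is a uniformly parabolic linear equation whose $H^1$-energy and maximal-regularity estimates give the required norms, with the pressure term $\alpha q(\tilde v)_z$ treated as a forcing controlled by $\|\tilde v\|_{H^1}$; step (iii) is an immediate ODE estimate using the trace of $\tilde v$ and $\underline u_z$ at $z=1$, which lies in $L^2_t$ by the parabolic estimate.

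Choosing $T_0$ small enough depending only on $M$ the three estimates show that the map $(v^\sharp,u^\sharp,b^\sharp)\mapsto(\tilde v,\underline u,b)$ sends $\mathcal B_{T_0,R}$ into itself. To obtain a fixed point I would then estimate the difference of two iterates in the weaker norm $L^\infty_tL^2_z\times L^\infty_tL^2_z\cap L^2_tH^1_z\times H^1_t$, where the nonlinear dependence on $\tilde v$ in the viscous coefficient and in $q(\tilde v)$ is Lipschitz thanks to the uniform lower and upper bounds on $\tilde v$; a Gronwall argument yields a contraction for $T_0$ further reduced if necessary. Uniqueness at the level of Definition~\ref{def0} follows from the same difference estimate applied to two strong solutions. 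The main obstacle is the coupling at $z=1$: the trace $\underline u_z(t,1)$ feeds the ODE for $b$, while $\dot b$ is a Dirichlet datum for $\underline u$, so the parabolic estimates must be performed with an inhomogeneous moving boundary datum and one must carefully verify compatibility of the constructed $\underline u$ with $\dot b=\underline u(\cdot,1)$; this is precisely the point where the treatment of \cite{MTT} is adapted, using that $u_0(b_0)=u_0^b$ is prescribed and matches $b_1$ through the compatibility condition implicit in the hypotheses.
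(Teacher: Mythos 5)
Your scheme differs from the paper's in how it linearizes the piston coupling, and that difference is where a genuine gap appears. The paper follows \cite{MTT} and \cite{spring} by using a \emph{monolithic} linearization: it freezes the transport coefficients $\alpha_0,\tilde v_0$ and solves the linear parabolic equation for $\underline u$ and the ODE for $b$ \emph{simultaneously} as a single coupled system (this is \eqref{lin.in.sys}, whose well-posedness is imported from Theorem~4.1 of \cite{spring}), so that the interface identity $\underline u(t,1)=\dot b(t)$ holds exactly at every stage of the iteration; the transport equation for $\tilde v$ is then solved in a separate second step using the estimate \eqref{est}. You instead propose a fully \emph{decoupled} sequential solve: transport for $\tilde v$ using $u^\sharp$, then the parabolic problem for $\underline u$ with the frozen Dirichlet datum $\underline u(t,1)=\dot b^\sharp(t)$, then the ODE for $b$ driven by the trace $\alpha\underline u_z/\tilde v$ at $z=1$.

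The difficulty with the decoupled version is the feedback loop $\dot b^\sharp\mapsto \underline u_z(\cdot,1)\mapsto b$: this contains a Dirichlet-to-Neumann operator for a parabolic equation whose operator norm (in the natural trace spaces, e.g. $H^{3/4}_t\to H^{1/4}_t$) does \emph{not} shrink as $T_0\to 0$, so the Gronwall-in-time argument you invoke cannot make this part of the map contractive by choosing $T_0$ small. Equally, the energy identity that lets one control $\dot b$ and $\|\underline u_z\|$ together — test the momentum equation by $\underline u$ and the ODE by $\dot b$, so the boundary stress term cancels — requires $\underline u(t,1)=\dot b(t)$ to hold at the linear level, and your scheme gives $\underline u(t,1)=\dot b^\sharp(t)\ne\dot b(t)$ during the iteration, breaking the cancellation. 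You actually name this as ``the main obstacle'' in your last paragraph, but you do not resolve it: you gesture at ``the treatment of \cite{MTT}'' without noting that this treatment is precisely to \emph{not} decouple, i.e., to solve the coupled parabolic-ODE linear system monolithically. As written, your Step (ii)/(iii) decomposition would need an additional argument (e.g. a compactness/implicit-function argument, or smallness of the added-mass-type operator, which is not automatic) to close the fixed point, and the contraction estimate you sketch does not supply it. The order of steps (you solve the transport first, the paper solves it second) is a smaller difference and by itself benign, since the transport step with $\beta(t,0)=0$ and incoming characteristics at $z=0$ is indeed well-posed using \cite{H} exactly as you say.
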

\begin{proof}
The proof of the above theorem is based on a "monolithic" linearization of the system of  \eqref{0000} and on an application of the Banach fixed point, the corresponding step consists in first solving uncoupled linear parabolic equation taking in to account the piston-spring motion, with non homogeneous boundary conditions. More precisely, we consider the following linear parabolic type system 
\begin{equation}\label{lin.in.sys}
                \left \{ \begin{array}{lrl}
\b{u}_t-\alpha_0\left(\frac{\alpha_0}{\tilde{v}_0} \b{u}_{z}\right) = f_1(t), \ \hspace{0.4cm} (t,z)\in [0,T^{\star})\times (0,1),\\
\b{u}(t,0)=\dot{b}(t), \ t\in  [0,T^{\star}),\\
\ddot{b}(t)+l\dot{b}(t) +K(b(t)-b_0)= \left[\frac{\alpha_0}{\tilde{v}_0} \b{u}_{z}\right](t,0)+f_2(t) , \hspace{1cm} t\in [0,T^{\star}),\\
\b{u}(0,z)=\b{u}_0(z),\ \hspace{2cm} z \in (0,1),\\
\b{u}(t,0)=\b{u}_{in}(t),\hspace{1cm} t\in [0,T^{\star}), \\
b(0)=b_0, \ \dot{b}(0)=b_1,
       \end{array}\right.
        \end{equation}   
where $f_1$ and $f_2$ are given sources terms. 
\begin{remark}
Due to the fact that the system \eqref{lin.in.sys} takes into account the spring which given is  by  the equation \eqref{lin.in.sys}$_3$ comparing to the linear parabolic system obtained in \cite{MTT} is the same studied system in \cite{spring} with $\alpha_0=\tilde{v}_0=1, \ \b{u}_{in}=0$, 
The proof of the result of  an existence and uniqueness local in time  is an adaptation of the proof of Theorem 4.1 from \cite{spring}.  So we omit the details. Moreover, we have the following estimate
\begin{equation}\label{est}
\|\b{u}_{z}\|_{L^2([0,T_0],L^{\infty}(0,1))} \leq KT_0^{1/8}.
\end{equation}
\end{remark}
The second step, is to use the velocity field $\b{u}$ solution of \eqref{lin.in.sys}, to solve the following  initial and boundary value problem 
\begin{equation}\label{3.3}
                \left \{ \begin{array}{lrl}
\tilde{v}_t+\beta \tilde{v}_{z}=\alpha \b{u}_{z}, \ \hspace{2cm} (t,z)\in [0,T^{\star})\times (0,1),\\
\b{u}(t,0)=\b{u}_{in}(t)>0, \ \hspace{1.5cm} t\in [0,T^{\star}),\\
\tilde{v}(t, 0)=\frac{1}{\rho_0(t)},\hspace{2.3cm} t\in (0,T^{\star}),\\
\Tilde{v}(0,z)=\tilde{v}_0(z),\ \hspace{2cm} y \in (0,1).\\
       \end{array}\right.
        \end{equation}
Using the estimation \eqref{est}, we deduce a local existence of strong solution for the transport equation with   unnecessary vanishing boundary conditions. For the proof we use appendix \cite{MTT} and corollary 2.3 \cite{H}.
\end{proof}

The solution obtained above might be extended up to an arbitrary time $T^*$ -- this is proven in the theorem following.
\begin{theorem}
Let $T^{\star}>0$ be an arbitrary time for the moment. Let us assume that $(\rho_0,u_0,b_0,b_1,u_{in},\rho_{in})$ satisfy the assumptions of Theorem \ref{th1}.
Then, the problem formed by \eqref{Mod}, \eqref{M1} with the boundary conditions \eqref{M2}, \eqref{M4} and the initial condition \eqref{M3} admits a unique strong solution on $[0,T^{\star})$.
\end{theorem}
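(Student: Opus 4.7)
The plan is a standard continuation argument. Let $T_{\max}$ denote the supremum of $\tau\in(0,T^{\star})$ for which Theorem \ref{th1} produces a strong solution on $[0,\tau]$ in the class of Definition \ref{def0}; by Theorem \ref{th1} we have $T_{\max}>0$. Assume for contradiction that $T_{\max}<T^{\star}$. The goal is to show that all the quantities governing the largeness constant $M$ in the hypotheses of Theorem \ref{th1} remain controlled on $[0,T_{\max})$, so that Theorem \ref{th1} applied with initial time $T_{\max}-\delta$ yields a solution extending past $T_{\max}$.

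The first step is the basic energy identity, obtained by multiplying the momentum equation of \eqref{55} by $\tilde u$, integrating in $y$, and adding $\dot b$ times the ODE for $b$:
\begin{equation*}
\frac{d}{dt}\left(\int_{-\eta_{in}(t)}^{0}\left(\frac{\tilde u^2}{2}+\frac{v^{1-\gamma}}{\gamma-1}\right)dy+\frac{\dot b^2}{2}+\frac{K(b-b_b)^2}{2}\right)+\mu\int_{-\eta_{in}(t)}^{0}\frac{\tilde u_y^2}{v}\,dy+l\dot b^2=\mathcal B(t),
\end{equation*}
where $\mathcal B(t)$ collects the boundary contributions at $y=-\eta_{in}(t)$ and is controlled uniformly by the $H^1$ bounds on $(\rho_{in},u_{in})$ together with their prescribed positive lower bound $1/M$. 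Gronwall's lemma then produces $L^{\infty}_tL^2_y$ control on $\tilde u$, energy-level control of $v^{1-\gamma}$, a uniform upper bound on $b$ and on $|\dot b|$, and — thanks to the spring potential $K(b-b_b)^2/2$ — keeps $b$ in a bounded interval; the positivity $b(t)>0$ is automatic on $[0,T_{\max})$ by continuity.

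The pivotal step is to establish uniform pointwise two-sided bounds on $v$ on $[0,T_{\max})$. I would follow the Kazhikhov--Shelukhin approach (\cite{kazhikhov,AnMoKa}): integrate the momentum equation in $y$ from $0$ up to $y$ to isolate the effective viscous flux $\mu\tilde u_y/v - q(v)$, then use $v_t=\tilde u_y$ to rewrite the resulting identity as an ODE in $t$ for $\log v$ at each fixed $y$, and solve it explicitly in terms of the data and of the already-controlled energy quantities. Because the boundary value $v(t,-\eta_{in}(t))=1/\rho_{in}(t)$ is bounded above and below by hypothesis, and because the piston boundary contribution at $y=0$ is controlled by the energy estimate plus the ODE for $b$, this representation yields a constant $C=C(M,T^{\star},\gamma,k,l,b_b)$ with $C^{-1}\le v(t,y)\le C$ throughout.

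Once $v$ and $1/v$ are pointwise bounded, the $\tilde u$-equation becomes uniformly parabolic with bounded coefficients, and a regularity estimate analogous to \eqref{est}, combined with differentiating the mass equation in $y$, upgrades the information to uniform $H^1_y$ control of $\tilde u$ and of $v$, together with $L^2_t$ control of $\ddot b$. Transferring back to Eulerian variables provides uniform bounds on $\|u(\tau,\cdot)\|_{H^1}$, $\|\rho(\tau,\cdot)\|_{H^1}$, $\|1/\rho(\tau,\cdot)\|_{L^{\infty}}$, $|\dot b(\tau)|$, and a uniform lower bound on $b(\tau)$ for $\tau<T_{\max}$. These bounds are exactly what is needed to invoke Theorem \ref{th1} at $\tau=T_{\max}-\delta$ and continue the solution past $T_{\max}$, contradicting the definition of $T_{\max}$. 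The main obstacle is the density estimate: the Lagrangian domain itself moves according to \eqref{eta.out}, and the inflow boundary value of $v$ is not frozen but is a prescribed function of time, so the Kazhikhov--Shelukhin representation must be carried out carefully — in particular one must track the boundary flux terms through the rescaling \eqref{cc} and use the positivity and $H^1$ regularity of $(\rho_{in},u_{in})$ to keep the resulting exponential factor bounded uniformly on $[0,T_{\max})$.
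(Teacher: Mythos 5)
Your proposal follows the same route as the paper: both rest on the basic energy identity in Lagrangian coordinates plus a Kazhikhov--Shelukhin pointwise bound on $v$ obtained by integrating the momentum equation written with $M(v)=\mu\log v$, and both conclude $b(t)=\int_{-\eta_{in}}^0 v\,dy\geq\eta_{in}(t)\,\underline v>0$ before invoking a regularity bootstrap (the paper outsources this to \cite{MTT}) to continue past any putative blow-up time. The one place you gloss is the boundary term $\sigma(t,-\eta_{in})u_{in}(t)$ in the energy balance, which is not directly ``controlled by the $H^1$ bounds on $(\rho_{in},u_{in})$'' but requires the additional trick of multiplying the momentum equation by $u_{in}(t)$ and integrating --- the step the paper carries out explicitly.
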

\begin{proof}
It suffices to show that $b(t)$ remains bounded and positive for all $t\in [0,T^*]$ as then one may adopt the method from \cite{MTT}. To reach this goal, we use the energy inequality and lower bound of specific volume. Both is deduced in the  Lagrangian setting \eqref{55}. First, we introduce a function $w(t,y)\in C^1$ satisfying $w(t,-\eta_{in}) = -\dot \eta_{in}(t)$ and $w(t,0) = 0$. The transport theorem then yields:
\begin{multline}\label{ene.ine.in}
\partial_t \left(\frac 12 \int_{-\eta_{in}}^0 |\tilde u(t,y)|^2\, {\rm d}y\right) = \int_{-\eta_{in}}^0 \tilde u_t(t,y) \tilde u(t,y)\, {\rm d}y + \int_{\eta_{in}}^0\left(\frac 12 |\tilde u(t,y) |^2 w(t,y) \right)_y\, {\rm d}y\\
= \int_{-\eta_{in}}^0 \left(\mu \left(\frac{\tilde u_y}{v}\right) - q\right)_y\tilde u(t,y) \, {\rm d}y + \frac 12 |\tilde u(t,y)|^2w(t,0) - \frac 12 |\tilde u(t,y)|^2 w(t,-\eta_{in}(t))
\end{multline}
The last two terms on the right hand side are bounded due to the prescribed boundary conditions. For the first term, we compute as follows:
\begin{equation} \label{ene.ine.in}
\int_{-\eta_{in}}^0 \left(\mu \left(\frac{\tilde u_y}{v}\right) - q\right)_y\tilde u(t,y) \, {\rm d}y  = -\int_{-\eta_{in}}^0 \mu \left(\frac{\tilde u_y} v\right)\tilde u_y(t,y)\, {\rm d}y+ \int_{-\eta_{in}}^0 q \tilde u_y(t,y)\, {\rm d}y + \left[\left(\mu\left(\frac{\tilde u_y}v\right) - q \right)\tilde u(t,y)\right]_{-\eta_{in}}^0.
\end{equation}
The first term has just a correct sign and it will appear on the left hand side of the energy inequality. The second term is handled as follows
\begin{multline}
\int_{\eta_{in}}^0 q \tilde u_y(t,y)\, {\rm d}y = \int_{\eta_{in}}^0 q v_t(t,y) \, {\rm d}y  = \int_{\eta_{in}}^0 Q_t\, {\rm d}y = \partial_t \int_{-\eta_{in}}^0 Q\, {\rm d}y - \int_{-\eta_{in}}^0 \left(Qw(t,y)\right)_y\, {\rm d}y\\ = \partial_t \int_{-\eta_{in}}^0 Q\, {\rm d}y - Qw(t,0) + Qw(t,-\eta_{in})
\end{multline}
where $Q = Q(v) = -\int_v^\infty q(v)\, {\rm d}v$ is negative and the last two terms are bounded due to the prescribed boundary conditions.
It remains to handle the boundary term of \eqref{ene.ine.in}. In what follows, we use $\sigma$ as an abbreviation of the stress tensor, i.e., $\sigma =\mu \left(\frac{\tilde u_y}v\right)-q$. We have
$$\left[\sigma \tilde u(t,y)\right]_{-\eta_{in}}^0 = \sigma(t,0)\tilde u(t,0) - \sigma(t,-\eta_{in})u_{in}(t) = -\frac 12 \partial_t(\dot b^2(t)) - l\dot b(t)^2 - \frac 12K\partial_t\left(b(t) - b_b\right)^2 - \sigma(t,-\eta_{in}(t))u_{in}(t)$$
To handle the last term, we multiply the momentum equation of \eqref{55} by $u_{in}$ to deduce
$$
\sigma(t,-\eta_{in}) u_{in}(t) = -\partial_t \int_{-\eta_{in}}^0\tilde u_t(t,y) u_{in}(t)+ \tilde u (t,y) (u_{in}(t))_t\, {\rm d}y - u_{in}(t)^2 \dot \eta_{in}(t)
$$
We take all the calculations together and plug it into \eqref{ene.ine.in} to deduce

\begin{multline*}
\partial_t\left( \frac 12 \int_{-\eta_{in}}^0 |\tilde u(t,y)|^2\, {\rm d}y -
\int_{-\eta_{in}}^0 Q \, {\rm d}y + 
\frac 12\left(\dot b(t)^2 + K\left(b(t) - b_b\right)^2\right)\right)\\
+\int_{-\eta_{in}(t)}^0 \mu \left(\frac{\tilde u_y} v \right)\tilde u_y\, {\rm d}y + l \dot b(t)^2\\
=  Q w(t,-\eta_{in}) + \partial_t \int_{-\eta_{in}}^0 \tilde u_t(t,y) u_{in}(t)+ \tilde u(t,y)(u_{in}(t))_t\, {\rm d}y + u^2_{in}(t)\dot \eta_{in}(t) - \frac 12 |\tilde u|^2 w(t,-\eta_{in}(t))
\end{multline*}
We integrate over a time interval $[0,t]\subset [0,T^*]$ and we use the Gronwall inequality to get the energy estimate
\begin{multline*}
\frac 12 \int_{-\eta_{in}}^0 |\tilde u(t,y)|^2\, {\rm d}y - \int_{-\eta_{in}}^0 Q\, {\rm d}y + \frac 12\left(\dot b(t)^2 + K\left(b(t) - b_b\right)^2\right)
\\ +\int_0^{T^*}\int_{-\eta_{in}}^0 \mu \left(\frac{\tilde u_y} v \right)\tilde u_y\, {\rm d}y + l \dot b(t)^2\, {\rm d}t\leq C,
\end{multline*}
where $C$ depends on initial and boundary conditions and on $T^*$.\\
We take $M(v) = \mu \log v$, i.e., $M'(v) = \frac \mu v$. The momentum equation yields
\begin{equation}\label{int.vol}\tilde u_t = M_{ty} - q_y.\end{equation}
We integrate \eqref{int.vol} with respect to time and space over a set $[0,t]\times [y,0]$ where $t\in[0,T^*]$ and $y\in[-\eta_{in}(t),0]$ are chosen arbitrary. We arrive to 
$$\int_y^0 \tilde u(t,s)-\tilde u(0,s)\, {\rm d}x = M(t,0) - M(0,0) - M(t,y) +M(0,y) -  \int_0^tq(s,0) - q(s,y)\, {\rm d}s$$
The boundary condition yields 
$$\int_0^t q(s,0) = M(t,0) - M(0,0) + \dot b(t) + l b(t) + \int_0^t K(b(s) - b_0)\, {\rm d}s$$
and, consequently, 
$$
-M(t,y) = \int_y^0 u(t,s) - u(0,s)\, {\rm d}s + \dot b(t) + l b(t) + \int_0^tK(b(s) - b_0)\, {\rm d}s - \int_0^tq(s,y)\, {\rm d}s.
$$
The right hand side of the above inequality is bounded in terms of energy (recall $q$ is always positive) and thus
$$
-M(t,y)\leq C\ \Rightarrow \ \log \frac 1{v(t,y)}\leq C
$$
for  every $(t,y)\in [0,T^*]\times [-\eta_{in}(t),0]$ and, therefore,
$$
v(t,y)\geq \underline v \in \mathbb R^+
$$
The method of substitution yields
$$
\int_{\eta_{in}}^0 v(t,y)\, {\rm d}y = b(t),
$$
on the other hand,
$$
b(t) = \int_{\eta_{in}}^0 v(t,y)\, {\rm d}y\geq \eta_{in} \underline v > 0
$$
since $\eta_{in}$ is increasing and 
$\eta_{in}(0) >0$.
\end{proof}

\section{Outflow boundary conditions} \label{25}
%
%

\subsection{Local-in-time existence}
\begin{theorem}\label{outflow.local}
We assume the following initial conditions for the outflow case
\begin{equation}\label{ini.outflow}
\underline u(T^*,\cdot) \in H^1(0,1),\ \tilde v(T^*,\cdot)\in H^1(0,1),\ \frac 1{\tilde v(T^*,\cdot)}\in L^\infty (0,1),\ b(0)>0.
\end{equation}
Then,  there exists a strong solution $(\tilde v,\underline u,b)$ to \eqref{000} (in the sense of Definition \ref{def0}) on a time interval $[T^{\star},T_1]$ for certain $T_1>T^*$.
\end{theorem}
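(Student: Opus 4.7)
The plan is to adapt the Banach fixed-point strategy used for Theorem \ref{th1}, with the essential new feature that $\tilde v(t,0)$ is no longer a prescribed datum but an unknown boundary trace. The structural fact that makes this workable is that the coefficient $\beta$ of \eqref{a} satisfies $\beta(t,0)=0$: the characteristic of the transport equation $\tilde v_t + \beta \tilde v_z = \alpha \underline u_z$ starting from $z=0$ stays at $z=0$, so no Dirichlet condition for $\tilde v$ at the outflow boundary is admissible or needed, and the outflow trace of $\tilde v$ is simply propagated from the initial datum by an ODE along that stationary characteristic.

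Concretely, for some $M$ controlling the data \eqref{ini.outflow} and some $T_1>T^*$ to be chosen small, I would work in the closed convex set
\[
X_{T_1}=\Bigl\{(\tilde v_*,\underline u_*)\in C([T^*,T_1];H^1(0,1))^2 \ :\ \|(\tilde v_*,\underline u_*)\|_{C_tH^1_z}\leq 2M,\ \tfrac{1}{2M}\leq \tilde v_* \leq 2M\Bigr\},
\]
coupled with a piston trajectory $b_*$ obtained by integrating the spring ODE with the corresponding source term. From $(\tilde v_*,\underline u_*)$ one computes
\[
\eta_{out,*}(t)=\eta_{out}(T^*)+\int_{T^*}^t\frac{u_{out}(s)}{\tilde v_*(s,0)}\, {\rm d}s,
\]
and the coefficients $\alpha_*,\beta_*$ via \eqref{a}. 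For $T_1-T^*$ small the lower bound $\eta_{out,*}(t)\geq \eta_{out}(T^*)/2$ is preserved, which keeps $\alpha_*,\beta_*$ bounded and the linearized parabolic operator uniformly parabolic.

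The fixed-point map $\Phi:(\tilde v_*,\underline u_*)\mapsto(\tilde v,\underline u)$ is defined in two steps. First, I would solve the linear parabolic system for $\underline u$ coupled with the spring ODE,
\[
\underline u_t + \beta_*\underline u_z - \mu \alpha_*\bigl(\alpha_*\underline u_z/\tilde v_*\bigr)_z = -\alpha_* q(\tilde v_*)_z,\quad \underline u(t,1)=\dot b(t),\ \underline u(t,0)=\tilde u_{out}(t),
\]
by the same argument as in \cite{spring,MTT}; this yields $\underline u\in L^\infty_tH^1_z\cap L^2_tH^2_z$ with $\underline u_t\in L^2_tL^2_z$ and an estimate of the form $\|\underline u_z\|_{L^2([T^*,T_1];L^\infty(0,1))}\leq K(T_1-T^*)^{1/8}$. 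Second, I would solve the transport equation $\tilde v_t+\beta_*\tilde v_z = \alpha_*\underline u_z$ with initial datum $\tilde v^*$ and no boundary condition; the method of characteristics provides existence, $H^1$-regularity, a positive lower bound on $\tilde v$, and --- since $\beta_*(t,0)=0$ --- an $H^1$-trace $\tilde v(\cdot,0)$ governed by the ODE $\partial_t\tilde v(t,0) = \alpha_*(t,0)\underline u_z(t,0)$.

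Combining the estimates of the two steps, the short-time smallness inherited from the factor $(T_1-T^*)^{1/8}$ shows that $\Phi$ maps $X_{T_1}$ into itself and is a contraction in a weaker norm (typically $C_tL^2_z$ for $\tilde v$ and $L^2_tH^1_z$ for $\underline u$) for $T_1-T^*$ sufficiently small, yielding the desired strong solution. The main obstacle is the new coupling between the outflow trace $\tilde v(t,0)$ and the geometric coefficients $\alpha_*,\beta_*$ through $\eta_{out,*}$: any estimate on $\tilde v$ feeds back into the coefficients of both the parabolic and the transport subproblems, in contrast to the inflow case where $\rho_{in}$ was prescribed a priori. Controlling this feedback requires both the uniform lower bound on $\eta_{out,*}$ and the $H^1$-in-time continuity of the trace $\tilde v(\cdot,0)$, which are precisely the ingredients that place $\alpha_*,\beta_*$ in a function space compatible with the parabolic estimates borrowed from \cite{spring,MTT}.
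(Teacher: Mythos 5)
Your plan is correct and viable, but it is organized differently from the paper's own proof. You set up a single, monolithic Banach fixed point on the pair $(\tilde v_*,\underline u_*)\in C([T^*,T_1];H^1)^2$: from these you reconstruct $\eta_{out,*}$ and hence $\alpha_*,\beta_*$, then you solve a fully linearized parabolic problem for $\underline u$ coupled to the spring ODE, followed by a transport equation for $\tilde v$ with no boundary datum at $z=0$; the contraction is then closed in a weaker norm for $T_1-T^*$ small. The paper (Proposition \ref{fixpunkt}) instead isolates the genuinely new scalar unknown $\eta_{out}$ as the sole fixed-point variable: working in the set $\Psi\subset C^1([T^*,T_1])$, for each candidate $\eta_{out}$ it treats the geometric coefficients $\alpha,\beta$ as frozen, invokes existence of the \emph{nonlinear} fluid--piston solution $(\underline u,\tilde v,b)$ on the corresponding fixed domain essentially by citing \cite[Section 3]{MTT}, extracts the trace $\tilde v(\cdot,0)$, and shows the update map $S(\eta_{out})(t)=\eta_0+\int_{T^*}^t u_{out}(\tau)/\tilde v(\tau,0)\,{\rm d}\tau$ is a contraction in $C^1$. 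Your route is more self-contained, since you never invoke the nonlinear fixed-domain theory as a black box, but you must simultaneously control two sources of nonlinearity (the fluid--piston coupling and the geometric coupling through $\eta_{out,*}$) inside a higher-dimensional function space; the paper's nested decomposition keeps the new part of the argument confined to a contraction on a single scalar function of time, at the price of quoting \cite{MTT} for each inner solve. Both are legitimate; your explicit observation that $\beta(t,0)=0$, so the trace $\tilde v(t,0)$ is propagated by the ODE $\partial_t\tilde v(t,0)=\alpha(t,0)\underline u_z(t,0)$ and cannot be prescribed, is precisely the structural fact that in the paper is absorbed implicitly into the definition of $S$ and the trace estimates $\|\tilde v(t,\cdot)-\tilde v(T^*,\cdot)\|_\infty\leq C(T_1-T^*)^{1/4}$ in the proof of Proposition \ref{fixpunkt}.
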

\noindent This result is achieved by a fixed point theorem. The proof is performed in the rest of this subsection and we first show that certain mapping is a contradiction. This is the content of the following proposition.
    \begin{proposition}\label{fixpunkt}
 We denote by $\Psi$ the following set of functions
        \begin{equation}
            \Psi=\{ g\in C^1([T^{\star},T]), \  \ g(T^{\star})=\eta_{0} \ \text{and} \ -m\leq \dot{g} \leq 0\} 
        \end{equation}
where $m$ is a constant greater than $\eta_0$. 
We define the operator $S$ as follows
        \begin{align}
            S(\eta_{out}(t))= \int_{T^{\star}}^{b(T^{\star})} \rho (T^{\star},x)\, {\rm d}x +\int_{T^{\star}}^t u_{out}(t) \rho(\tau,0)\, {\rm d}\tau \nonumber\\
            = \int_{T^{\star}}^{b(T^{\star})} \rho (T^{\star},x)\, {\rm d}x +\int_{T^{\star}}^t u_{out}(t) \frac{1}{\tilde{v}(\tau,0)}\, {\rm d}\tau 
        \end{align}
where $\tilde v$ is a solution to \eqref{000} with given $\eta_{out}(t)$       
   Then, we have \begin{itemize}
        \item  $S(\Psi) \subset \Psi$
        \item The mapping  $S:\Psi \to \Psi$ is a contraction in $C^1$ norm. 
    \end{itemize}
    \end{proposition}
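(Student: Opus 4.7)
The strategy is a short-time Banach contraction argument on the closed convex set $\Psi$ endowed with the $C^1$ topology; implicitly one works on $[T^\star, T_1]$ with $T_1 - T^\star$ small enough. For each candidate $g \in \Psi$, the coefficients $\alpha[g], \beta[g]$ defined by \eqref{a} are $C^1$ in time, so the same linear parabolic/transport theory that underlies the proof of Theorem \ref{th1} produces a unique strong solution $(\tilde v, \b{u}, b)$ to \eqref{000} with $\eta_{out}$ replaced by $g$. It is on this solution that the two claimed properties of $S$ are checked.

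\textbf{Invariance $S(\Psi) \subset \Psi$.} The value $S(g)(T^\star) = \eta_0$ is immediate since the time integral vanishes, and differentiating the definition gives
\begin{equation*}
\dot S(g)(t) = \frac{u_{out}(t)}{\tilde v(t, 0)},
\end{equation*}
which is non-positive because $u_{out} \leq 0$ and $\tilde v > 0$. The lower bound $\dot S(g) \geq -m$ requires a positive lower bound on $\tilde v(t, 0)$. From \eqref{a} one has $\beta(t, 0) = 0$, so the characteristic of the transport equation $\tilde v_t + \beta \tilde v_z = \alpha \b{u}_z$ through $(T^\star, 0)$ remains on $\{z = 0\}$, and therefore
\begin{equation*}
\tilde v(t, 0) = \tilde v^\star(0) + \int_{T^\star}^t \alpha(s, 0) \b{u}_z(s, 0) \, {\rm d}s.
\end{equation*}
Using $|\alpha(\cdot, 0)| \leq 1/\eta_0$ (guaranteed by the definition of $\Psi$ and $\dot g \leq 0$) together with an analogue of the short-time bound \eqref{est} for $\b{u}_z$, Cauchy--Schwarz in time gives a perturbation of order $(T_1 - T^\star)^{5/8}$, so that $\tilde v(t, 0) \geq \tilde v^\star(0)/2 > 0$. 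Consequently $|\dot S(g)(t)| \leq 2 \|u_{out}\|_\infty / \tilde v^\star(0)$, which is $\leq m$ for $m$ large enough (fixed independently of $T_1$).

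\textbf{Contraction in $C^1$.} Let $g_1, g_2 \in \Psi$ and let $(\tilde v_i, \b{u}_i, b_i)$ be the corresponding strong solutions of \eqref{000}. The explicit formulas \eqref{a} and the uniform lower bound on $g_i$ yield
\begin{equation*}
\|\alpha_1 - \alpha_2\|_{L^\infty} + \|\beta_1 - \beta_2\|_{L^\infty} \leq C \|g_1 - g_2\|_{C^1([T^\star, T_1])}.
\end{equation*}
Subtracting the equations of \eqref{000} for the two solutions and running the standard energy estimates -- parabolic for $\b{u}_1 - \b{u}_2$ (whose boundary values at $z = 0$ coincide and at $z = 1$ are governed by the ODE for $b_1 - b_2$), and transport along the $z = 0$ characteristic for $\tilde v_1 - \tilde v_2$ -- I expect to obtain, for some exponent $\theta > 0$,
\begin{equation*}
\|\tilde v_1 - \tilde v_2\|_{C([T^\star, T_1] \times [0, 1])} \leq C (T_1 - T^\star)^\theta \|g_1 - g_2\|_{C^1}.
\end{equation*}
Combining this with the identity
\begin{equation*}
\dot S(g_1)(t) - \dot S(g_2)(t) = u_{out}(t) \frac{\tilde v_2(t, 0) - \tilde v_1(t, 0)}{\tilde v_1(t, 0) \tilde v_2(t, 0)}
\end{equation*}
and the analogous integrated identity for $S(g_1) - S(g_2)$ yields $\|S(g_1) - S(g_2)\|_{C^1([T^\star, T_1])} \leq C (T_1 - T^\star)^\theta \|g_1 - g_2\|_{C^1}$, and shrinking $T_1 - T^\star$ makes the Lipschitz constant strictly less than $1$.

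\textbf{Main obstacle.} The delicate step is the Lipschitz estimate for $\tilde v_1 - \tilde v_2$ at the free endpoint $z = 0$. Unlike the inflow regime of Section \ref{s2}, where $\tilde v(\cdot, 0)$ is prescribed, here it is generated by an ODE along a characteristic whose drift $\beta$ itself depends on the unknown $g$. The parabolic smoothing of $\b{u}$ controls $\b{u}_z$ (the source of the transport equation), but this smoothing must be propagated uniformly in $g$ all the way to the boundary $z = 0$, while the piston ODE simultaneously couples the system through the moving-wall condition at $z = 1$. Making all the resulting constants independent of the particular $g \in \Psi$ is the technical heart of the argument.
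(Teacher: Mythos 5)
Your proposal is correct in spirit and reaches the same conclusion, but the route to the key estimate on $\tilde v(\cdot,0)$ is genuinely different from the paper's. The paper works globally in $z$: it multiplies \eqref{000}$_1$ by $\tilde v$, differentiates in $z$ and multiplies by $\tilde v_z$, sums the two, applies Gr\"onwall to obtain $\|\tilde v\|_{L^\infty(H^1)}\leq C$ uniformly in $\eta_{out}\in\Psi$, reads off $\|\tilde v_t\|_{L^\infty(L^2)}\leq C$ directly from the equation, and then deduces $\|\tilde v(t,\cdot)-\tilde v(T^*,\cdot)\|_{L^\infty}\leq C(T-T^*)^{1/4}$ by Gagliardo--Nirenberg interpolation; the same $H^1$ machinery is reused verbatim for the difference $\tilde V$ in the contraction step. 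You instead exploit the fact that $\beta(t,0)=0$, so $z=0$ is an invariant characteristic and the transport equation collapses to the ODE $\partial_t \tilde v(t,0)=\alpha(t,0)\,\underline u_z(t,0)$; you then control the right-hand side using a short-time bound on $\underline u_z$. That is more surgical and avoids the full $H^1$ estimate on $\tilde v$, though it requires a pointwise-in-$z$ control of $\underline u_z$ at the boundary, and one should be careful that this is actually uniform in $g\in\Psi$. One small inequality slip that should be fixed: since $g(T^\star)=\eta_0$ and $\dot g\leq 0$, one has $g(t)\leq\eta_0$ and hence $|\alpha(t,0)|=1/g(t)\geq 1/\eta_0$, not $\leq 1/\eta_0$ as you claim. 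The correct version goes through once you restrict (as the paper does at the outset) to $T_1$ small enough that $g(t)\geq\tfrac12\eta_0$, giving $|\alpha|\leq 2/\eta_0$, which is all you actually use.
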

\begin{proof}
%
%
We note here that $T$ can be chosen so close to $T^*$ that $g(t)\geq \frac 12 \eta_0$ for every $t\in [T^*,T]$. Next, the solution $(\underline u,\tilde v,b)$ to \eqref{000} exists -- the proof of the existence follows the lines of \cite[Section 3]{MTT} -- just a minor modification is necessary in order to accommodate our setting. Nevertheless, this particular existence result is written in Section \ref{con.rem} for reader's convenience. Furthermore, one can deduce that there exists $T_1>T^*$ such that there exists a solution $(\underline u,\tilde v,b)$ to \eqref{000} for every $\eta_{out}\in \Psi$.
\\ We also assume the time is so short that $b>0$ on $[T^{\star},T_1]$. In particular, 
\begin{equation*}
\begin{split}
\tilde v\in& C([T^{\star},T_1],H^1(0,1))\cap H^1((T^{\star},T_1),L^2(0,1))\\
\underline u\in& C([T^{\star},T_1],H^1(0,1))\cap L^2([T^{\star},T_1], H^2(0,1))\cap H^1([T^{\star},T_1],L^2(0,1)),\\
b\in& H^2(0,T_1),
\end{split}
\end{equation*}
uniformly with respect to $\eta_{out}$ and, moreover, there is $c>0$ such that
$$
\frac {1}{c}\leq \tilde v(t,x)\leq c
$$
for every $(t,x)\in (T^{\star},T_1)\times (0,1)$.

Next, using the fact that $u_{out}$ is a given continuous function, $T_1>T^{\star}$ is such that
    \begin{equation}
        \forall t\in [T^{\star}, T_1], \ |u_{out}(t)-u_{out}(T^{\star})|\leq \kappa,
    \end{equation}
where $\kappa>0$ is sufficiently small and it will be determined later.    
We multiply \eqref{000}$_1$ by $\tilde v$ and we use the integration by parts in order to deduce
$$
\partial_t\int_0^1 \frac 12 |\tilde v(t,z)|^2\, {\rm d}z - \frac 12\int_0^1 \beta_z(t,z) |\tilde v(t,z)|^2\, {\rm d}z + \frac 12\beta(t,1)\tilde v^2(t,1) = \int_0^1 \alpha(t,z) \underline u_z (t,z)\tilde v(t,z).
$$
Next, we differentiate \eqref{000}$_1$ with respect to $z$ and we multiply it by $\tilde v_z$ to get
$$
\frac 12 \int_0^1(\tilde v_z^2(t,z))_t\, {\rm d}z + \frac 12\int_0^1\beta_z(t,z) (\tilde v_z^2(t,z))\, {\rm d}z + \frac 12 \beta(t,1)(v_z(t,1))^2 = \int_0^1 \alpha(t,z) \underline u_{zz}(t,z) v_z (t,z)\, {\rm d}z.
$$
We sum these two equations and we skip several terms (recall $\beta(t,1)>0$ and $\underline u\in L^2((T^*,T_1), H^2(0,1))$) and we infer
$$
\partial_t \|\tilde v\|_{H^1}^2 \leq C\left( \|\tilde v\|_{H^1}^2 + 1\right).
$$
The Gr\"onwall lemma then yields
$$
\|\tilde v\|_{L^\infty(H^1)}^2 \leq C.
$$
Consequently, the equation \eqref{000}$_1$ itself then yields
$$
\|\tilde v_t\|_{L^\infty(L^2)} \leq C.
$$
By Gagliardo-Nirenberg 
\begin{multline*}
\|\tilde v(t,\cdot) - \tilde v(T^*,\cdot)\|_\infty \leq C\|\tilde v(t,\cdot) - \tilde v(T^*,\cdot)\|_{L^\infty(H^1)}^{3/4} \|\tilde v(t,\cdot) - \tilde v(T^*,\cdot)\|_{L^\infty(L^2)}^{1/4}\\
\leq C(T-T^*)^{1/4} \|\tilde v(t,\cdot) - \tilde v(T^*,\cdot)\|_{L^\infty(H^1)}^{3/4}\|\tilde v\|_{L^\infty(L^2)}^{1/4} \leq C(T-T^*)^{1/4}.
\end{multline*}
Thus, we have 
$$
S(\eta_{out})(T^*) = \int_{T^*}^{b(T^*)} \rho(T^*,x)\, {\rm d}x = \eta_0, \quad  S'(\eta_{out})(t) = \frac{u_{out}(t)}{\tilde v(\tau,0)} \leq 0,
$$
and
\begin{multline*}
\left|S'(\eta_{out})(t)\right| = \left|\frac{u_{out}(t)}{\tilde v(t,0)} - \frac{u_{out}(T^*)}{\tilde v(T^*,0)}  + \eta_0\right|\\
= \left|(u_{out}(t) - u_{out}(T^*))\frac 1{\tilde v(t,0)} +  u_{out}(0)\frac1{\tilde v(t,0)\tilde v(T^*,0)} (\tilde v(T^*,0) - \tilde v(t,0)) + \eta_0\right|,\\
\leq C\left(\kappa + (T_1-T^*)^{1/4}\right) + \eta_0,
\end{multline*}
and proper choice of $T_1>T^*$ (and consequently also the choice of $\kappa$) together with the assumption on $m$ yields $S:\Psi\to \Psi$.

Now, for the second result of the proposition, let $\eta_{out}^1,\eta_{out}^2\in \Psi $ and let 
$$
\alpha_i,\, \beta_i,\, \tilde v^i,\ \underline u^i,\ b_i,\ i\in \{1,2\}
$$
be appropriate constants and solutions to \eqref{000}.
We denote $\tilde{V} =\tilde{v}^1-\tilde{v}^2$, $\underline{U}=\underline{u}^1-\underline{u}^2$, and $B_i = b_1 - b_2$.
The system satisfied by $\tilde{V}$ has the form 
 \begin{equation}\label{ss0}
   \begin{split}
  \tilde{V}_t+\beta_1 \tilde{V}_{z}&= g(t,z)\qquad  (t,z)\in [T^{\star},T_1]\times [0,1],\\
  \tilde{V}(0,z)&=0\qquad z \in [0,1],
 \end{split}
\end{equation} 
where 
 \begin{equation}\label{gg}
  g(t,z)= \alpha_1 \underline{U}_{z} + (\alpha_1-\alpha_2) \b{u}^2_{z} + (\beta_2-\beta_1) \tilde{v}^1_{z}, \ \  \forall \ (t,z) \in [T^{\star},T_1] \times [0,1].
 \end{equation}
We proceed similarly to the previous paragraph -- we multiply \eqref{ss0}$_1$ by $\tilde V$, then we differentiate \eqref{ss0}$_1$ with respect to $z$ and we multiply it by $\tilde V_z$ in order to deduce the following:
\begin{equation}\label{eq:gronwall}
\partial_t \|\tilde V\|_{H^1} \leq C\left(\|\tilde V\|_{H^1} + \|g\|_{H^1}\right)
\end{equation}
The equation for $\underline U$ in a weak form read as follows
\begin{equation}\label{eq:gronwall2}
\underline U_t + \beta_1 \underline U_z - \mu \alpha_0\left(\alpha_0\frac{\underline U_z}{\tilde v_0}\right)_z = F
\end{equation}
where 
\begin{multline*}
F = \mu\left[\alpha_1\left(\alpha_1\frac{\underline u_z^1}{\tilde v^1}\right)_z - \alpha_0\left(\alpha_0\frac{\underline u_z^1}{\tilde v_0}\right) - \alpha_2\left(\alpha_2\frac{\underline u_z^2}{\tilde v^2}\right)_z +  \alpha_0\left(\alpha_0\frac{\underline u_z^2}{\tilde v_0}\right)\right]\\ + \left(\alpha_2-\alpha_1\right)q(\tilde v_1)_z  + \alpha_2 \left(q(\tilde v_1)_z - q(\tilde v_2)_z\right)
\end{multline*}
The unknown $\underline U$ is supposed to satisfy the following boundary conditions 
\begin{equation*}
\begin{split}
\underline U(t,1) &= 0,\ \underline U(t,0) =\dot B(t)\\
\ddot B(t)+ l\dot B(t) + K B(t)& = -\mu \left(\alpha_0 \frac{\underline U_z}{\tilde v_0}\right)(t,0) + f.
\end{split}
\end{equation*}
where 
$$
f = \alpha_1 q(\tilde v^1) - \alpha_2 q(\tilde v^2) - \mu \alpha_1\left(\alpha_1\frac{\underline u_z^1}{\tilde v^1}\right) + \mu \alpha_2\left(\alpha_2 \frac{\underline u_z^2}{\tilde v^2}\right) + \mu \alpha_0\left(\alpha_0\frac{\underline u_z^1}{\tilde v^1}\right) - \mu\alpha_0\left(\alpha_0\frac{\underline u_z^1}{\tilde v^2}\right)
$$
We multiply \eqref{eq:gronwall2} by $\underline U$ and integrate in order to get
\begin{equation}\label{eq:ene.par}
\partial_t \|\underline U\|_{L^2}^2 + \frac{\partial}{\partial t}\left(|\dot B|^2 + K|B|^2\right) + |\dot B|^2 + \|\underline U\|_{H^1}^2\leq C\left(\|\underline U\|_{L^2}^2 + \int_0^1 F U\, {\rm d}z + |f||\dot B|\right)
\end{equation}
Next, we multiply \eqref{eq:gronwall2} by $-\underline U_{zz}$ and we integrate the first term by parts. Next, the third term can be written as
$$
\mu\alpha_0^2\int_0^1 \frac 1{\tilde v_0}||\underline U_{zz}|\, {\rm d}z - \mu \alpha_0^2 \int_0^1\frac 1{\tilde v_0^2}\tilde v_{0z} \underline U_{z}\underline U_{zz}\, {\rm d}z
$$
where the second term can be estimated by means of the Gagliardo-Nirenberg and the regularity of $v_0$ inequality as follows
\begin{multline*}
\left|- \mu \alpha_0^2 \int_0^1\frac 1{\tilde v_0^2}\tilde v_{0z} \underline U_{z}\underline U_{zz}\, {\rm d}z\right|\leq C \|\underline U_z\|_{L^\infty}^2\|\tilde v_0\|_{H^1}^2 + \delta \|\underline U_{zz}\|_{L^2}^2\leq C\|\underline U_z\|^2_{L^\infty} + \delta \|\underline U_{zz}\|_{L^2}^2
\\
\leq C\|\underline U_{zz}\|_{L^2}\|\underline U_{z}\|_{L^2} + \delta \|\underline U_{zz}\|_{L^2}^2 \leq C\|\underline U_{z}\|^2_{L^2} + 2\delta \|\underline U_{zz}\|_{L^2}^2
\end{multline*}
for any $\delta>0$ sufficiently small.
In total, we deduce that
\begin{equation}\label{eq:ene.par2}
\partial_t\|\underline U\|_{H^1} + |\ddot B|^2 + \frac{\partial}{\partial t}|\dot B| + \|\underline U\|_{H^2}\leq C\left(\|\underline U\|_{H^1}^2 + \int_0^1 FU_{zz}\, {\rm d}z + |B||\ddot B|  + |f||\ddot B|\right)
\end{equation}
The definition of $\alpha_i$ and $\beta_i$ yields
\begin{equation*}
\|\alpha_1 - \alpha_2\|_{L^\infty} + \|\beta_2-\beta_1\|_{L^\infty([T^*,T]\times[0,1])} + \|(\beta_2)_z-(\beta_1)_z\|_{L^\infty([T^*,T]\times[0,1])} \leq C \|\eta_{out}^1 - \eta_{out}^2\|_{C^1}
\end{equation*}
and, moreover, the first bracket of $F$ can be written as
$$
\mu \left(\alpha_0^2 \left(\overline U_z (\tilde v^1 - \tilde v_0)\right)_z + \alpha_0^2\left(\overline u_z^2\tilde V\right)_z + \left(\alpha_1^2 - \alpha_2^2\right)\left(\frac{\overline u_z^2}{\tilde v^2}\right)_z + (\alpha_1^2 - \alpha_0^2)\left(\overline U_z  \frac 1{\tilde v^1} + \overline u_z^2 \frac 1{\tilde v^1\tilde v^2}\tilde V\right)_z \right)
$$
and $f$ can be seen as
$$(\alpha_1-\alpha_2)q(\tilde v^1) + \alpha_2\left(q(\tilde v^1) - q(\tilde v^2)\right) + \frac{\mu}{\tilde v^1}(\alpha_0^2 - \alpha_1^2)\underline U_z + \frac{\mu\underline u_z^2}{\tilde v^1\tilde v^2}(\alpha_0^2 - \alpha_1^2)\tilde V + \mu\frac{\underline u_z^2}{\tilde v^2}\left(\alpha_2^2 - \alpha_1^2\right)$$
and thus we may deduce
$$
|f|\leq C\left((\alpha_0^2 - \alpha_1^2)\|\underline U\|_{H^2} + (\alpha_0^2 - \alpha_1^2)\|\tilde V\|_{H^1} + (\alpha_2^2 - \alpha_1^2)\right)
$$
Due to the smoothness of coefficients and solutions $T_1>T^*$ can be chosen such that $|\alpha_1 - \alpha_0|$ and $|\tilde v_1 - \tilde v_0|$ are sufficiently small and we incorporate these into \eqref{eq:ene.par} and \eqref{eq:ene.par2} to deduce
\begin{equation}\label{eq:ene.par3}
\partial_t \|\underline U\|_{H^1}^2 + \|\underline U\|_{H^2}^2\leq C\left(\|\underline U\|_{H^1}^2+ \|\tilde V\|_{H^1}^2 +\|\eta_{out}^1 - \eta_{out}^2\|^2_{C^1}\right)
\end{equation}
The equations \eqref{eq:gronwall} and \eqref{eq:ene.par2} combined read as
\begin{equation*}
\partial_t\left(\|\tilde V\|_{H^1} + \|\underline U\|_{H^1}^2\right) + \|\underline U\|_{H^2}^2\leq C\left(\|\underline U\|_{H^1}^2 + \|\tilde V\|_{H^1}^2 +\|\eta_{out}^1 - \eta_{out}^2\|^2_{C^1}\right)
\end{equation*}
It is just a matter of routine 
to deduce
$$
\|\tilde U\|_{H^1} + \|\tilde V\|_{H^1}\leq C\|\eta_{out}^1 - \eta_{out}^2\|_{C^1}.
$$
Consequently, \eqref{ss0} yields
$$
\|\tilde V_t\|_{L^2}\leq C \|\eta_{out}^1 - \eta_{out}^2\|_{C^1}.
$$
Using the similar arguments as in the previous paragraph, we deduce 
$$
\|\tilde V(t,\cdot)\|_{L^\infty}\leq C(T-T^*)^{1/4}\|\eta_{out}^1 - \eta_{out}^2\|_{C^1}
$$
Consequently,
\begin{multline*}
   \left | S(\eta_{out}^1(t))-S(\eta_{out}^2(t)) \right | \leq \int_{T^{\star}}^t \left|u_{out}(s)\frac{1}{\tilde{v}^2(s,0)}-\frac{1}{\tilde{v}^1(s,0) }\right|\, {\rm d}s \\ 
   \leq C \int_{T^*}^{T} (T-T^*)^{1/4}  \, {\rm d}s \|\eta_{out}^1 - \eta_{out}^2\|_{C^1}, 
\end{multline*}
and 
\begin{equation*}
\left | \partial_t\left(S(\eta_{out}^1(t))-S(\eta_{out}^2(t))\right) \right | \leq \left|u_{out}(t)\frac{1}{\tilde{v}^2(t,0)}-\frac{1}{\tilde{v}^1(t,0) }\right| \leq C(T_1-T^*)^{1/4}\|\eta_{out}^1 - \eta_{out}^2\|_{C^1}
\end{equation*}
The proper choice of $T_1>T^*$ leads to the second claim of the lemma.
\end{proof}
Note that $\Psi\subset C^1([T^*,T_1])$ is a closed set and thus we apply the Banach fix point theorem in order to deduce the existence result of Theorem \ref{outflow.local}.
\subsection{Mass and energy estimates conservation}
We deduce the mass conservation law below. Although the presented form is not needed for our forthcoming computation, we are presenting it here because of politeness.
\begin{lemma}
            For every $t\in [T^{\star}, T_1]$ we have 
            \begin{equation}
                \int_0^{b(t)} \rho(t,x)\, {\rm d}x \leq C,
            \end{equation}
        \end{lemma}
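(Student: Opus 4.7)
The plan is to compute the time derivative of the total mass $m(t)=\int_0^{b(t)}\rho(t,x)\,{\rm d}x$ directly in Eulerian variables, using the Reynolds transport identity together with the continuity equation $\rho_t+(\rho u)_x=0$ from \eqref{11}. Specifically, I would write
$$
\frac{d}{dt}\int_0^{b(t)}\rho(t,x)\,{\rm d}x=\int_0^{b(t)}\rho_t(t,x)\,{\rm d}x+\rho(t,b(t))\dot b(t)=-\bigl[\rho u\bigr]_0^{b(t)}+\rho(t,b(t))\dot b(t),
$$
and then invoke the piston matching condition $u(t,b(t))=\dot b(t)$ from \eqref{M2} to cancel the right endpoint contribution, leaving only the boundary flux at $x=0$.

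The second step is to use the outflow sign: on the time interval $[T^\star,T_1]\subset[T^\star,T]$ the boundary condition is $u(t,0)=u_{out}(t)\le 0$, while $\rho(t,0)>0$ (this positivity being part of the regularity of the strong solution constructed in Theorem~\ref{outflow.local} and expressed via the upper bound on $\tilde v(t,0)$ established in Proposition~\ref{fixpunkt}). Substituting, one obtains
$$
\frac{d}{dt}\int_0^{b(t)}\rho(t,x)\,{\rm d}x=\rho(t,0)\,u_{out}(t)\le 0,
$$
so the mass is nonincreasing on $[T^\star,T_1]$.

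Integration in time from $T^\star$ to $t$ then gives the desired bound with the explicit constant
$$
C=\int_0^{b(T^\star)}\rho(T^\star,x)\,{\rm d}x=\eta_0,
$$
which is finite by the assumptions \eqref{ini.outflow} on the initial data at $T^\star$. In Lagrangian mass coordinates this computation is essentially trivial, since the total mass is literally $\eta_{out}(t)$, and the identity $\dot\eta_{out}(t)=u_{out}(t)/\tilde v(t,0)\le 0$ from the definition of $S$ in Proposition~\ref{fixpunkt} provides exactly the same monotonicity.

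I do not expect any real obstacle: the only delicate point is justifying that the boundary traces of $\rho$ and $u$ used in the flux integration are well-defined, which is immediate from the strong solution regularity class of Definition~\ref{def0} together with the continuity of $\tilde v$ and $\underline u$ on $[T^\star,T_1]\times[0,1]$ established during the fixed-point argument for Theorem~\ref{outflow.local}. Hence the proof reduces to one integration by parts and the sign of $u_{out}$.
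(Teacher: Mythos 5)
Your proposal is correct and follows essentially the same route as the paper: integrate the continuity equation over $[0,b(t)]$ via the Reynolds transport theorem, use the matching condition $u(t,b(t))=\dot b(t)$ to kill the right boundary term, use $u_{out}\le 0$ to see that the mass is nonincreasing, and bound it by its value at $T^\star$. Your write-up is a bit more explicit about the transport-theorem step (and corrects a small typo in the paper, where the upper limit of the initial-mass integral is written as $b_b$ rather than $b(T^\star)$), but there is no substantive difference in the argument.
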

        \begin{proof}
        We integrate the continuity equation in the system \eqref{11} on $[0,b(t)]$, similarly we obtain
        \begin{equation*}
            \frac{d}{dt} \left(\int_0^{b(t)} \rho(t,x)\, {\rm d}x \right)= \rho(t,0) u_{out}(t) <0, \ t\in (T^{\star}, T_1).
        \end{equation*}
        then, $\int_0^{b(t)} \rho(t,x)\, {\rm d}x $ is a 
 decreasing function, which mean that for all $t\in [T^{\star},T_1]$, we have 
 \begin{align*}
     \int_0^{b(t)} \rho(t,x)\, {\rm d}x \leq \int_0^{b_b} \rho(T^{\star},x)\, {\rm d}x.
 \end{align*}
 using the initial condition on $\rho(T^{\star},x)$, we obtain the desired estimate.
\end{proof}
Our next aim is to show estimates independent on time which allows to extend the solution beyond the time $T_1$ stated in Theorem \ref{outflow.local}. In what follows, we assume the strong solution exists on a nonempty time interval $[T^*, T_2]$ and we aim to deduce estimates which are independent on $T_2$ as far as $T_2$ is bounded.
We work with the system in Lagrangian coordinates for the rest of this section. In order to calculate the integrals properly, we use a letter $w$ for a function on $[T^*, T_2]\times \mathbb R$ which describes the movement of the domain, namely, it satisfies $w(t,-\eta_{out}(t)) = -\dot\eta_{out}(t)$ and $w(t,0) = 0$. It is used whenever we apply the Reynolds transport theorem (see \cite[Theorem 1.22]{NoSt} for reference). Recall, that the transport theorem yields
$$
\partial_t\int_{-\eta(t)}^0 f(t,y)\, {\rm d}y = \int_{-\eta(t)}^0 f_t(t,y)\, {\rm d}y + \int_{-\eta(t)}^0 (fw)_y(t,y)\, {\rm d}y
$$
for an arbitrary $C^1$ function $f:[T^*,T_0]\times [-\eta(t),0]\to \mathbb R$.\\
We would also like to recall that the method of substitution yields
\begin{equation}\label{ini.volume}
\int_{-\eta_{out}}^0 v(T^*,y)\, {\rm d}y = \int_{-\eta_{out}}^0 \frac1{\rho(T^*,y)}\, {\rm d}y = \int_0^{b(T^*)} 1\, {\rm d}x = b(T^*)
\end{equation}
and this is true for every $t\in [T^*,T_2]$, namely, we state the following lemma.
\begin{lemma}\label{specific.vol}
It holds that
$$
\int_{-\eta_{out}}^0 v(t,y)\, {\rm d}y =  b(t)
$$
for every $t\in [T^*,T_1]$.
\end{lemma}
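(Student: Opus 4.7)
The identity is essentially just the change-of-variables formula applied to the definition of the specific volume, so my plan is to reduce the claim to a substitution argument and then (for robustness) verify it dynamically. Fix $t\in[T^*,T_1]$ and recall that in Lagrangian mass coordinates the map $\chi(t,\cdot):[0,b(t)]\to[-\eta_{out}(t),0]$ given by $\chi(t,x)=-\int_x^{b(t)}\rho(t,s)\,{\rm d}s$ is a bijection with $\partial_x\chi(t,x)=\rho(t,x)$ and $v(t,\chi(t,x))=1/\rho(t,x)$. The substitution $y=\chi(t,x)$ then gives
$$
\int_{-\eta_{out}(t)}^0 v(t,y)\,{\rm d}y = \int_0^{b(t)} v(t,\chi(t,x))\,\rho(t,x)\,{\rm d}x = \int_0^{b(t)} \frac{1}{\rho(t,x)}\rho(t,x)\,{\rm d}x = b(t),
$$
which is exactly the claim. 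This is justified because the strong solution provided by Theorem \ref{outflow.local} has $1/\rho$ continuous and positive on the closed moving domain, making $\chi(t,\cdot)$ a $C^1$ diffeomorphism at each fixed time. This is the entire proof; it is simply the Lagrangian analogue of \eqref{ini.volume} done at time $t$ rather than at $t=T^*$.

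\textbf{Alternative verification via transport theorem.} Should one prefer to avoid invoking the Eulerian density directly, the same equality can be obtained dynamically. Set $I(t)=\int_{-\eta_{out}(t)}^0 v(t,y)\,{\rm d}y$ and apply the Reynolds transport theorem with the velocity function $w(t,y)$ introduced in the lead-up, satisfying $w(t,-\eta_{out}(t))=-\dot\eta_{out}(t)$ and $w(t,0)=0$. Using the continuity equation $v_t=\tilde u_y$ from \eqref{0101} together with the boundary values $\tilde u(t,0)=\dot b(t)$ and $\tilde u(t,-\eta_{out}(t))=u_{out}(t)$, the volume terms integrate to $\dot b(t)-u_{out}(t)$. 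The boundary term from $w$ at $y=-\eta_{out}(t)$ contributes $v(t,-\eta_{out}(t))\dot\eta_{out}(t)=\tfrac{1}{\rho(t,0)}\dot\eta_{out}(t)$, which equals $u_{out}(t)$ thanks to \eqref{eta.out}. These cancel, leaving $I'(t)=\dot b(t)$, and combined with $I(T^*)=b(T^*)$ from \eqref{ini.volume} this yields $I(t)=b(t)$.

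\textbf{Main obstacle.} Frankly, there is none of substance: the lemma is built into the very definition of the Lagrangian mass change of variables, and the only thing to verify is that the strong-solution class (Definition \ref{def0}) ensures the regularity and the uniform lower bound on $\rho$ needed to legitimize the substitution (or, equivalently, to apply the transport theorem and invoke \eqref{eta.out}). Both are immediate from the assumption $1/\rho\in C([0,T]\times[0,b(t)])$.
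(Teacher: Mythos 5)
Your primary argument is exactly the paper's: the lemma follows directly from the change-of-variables $y=\chi(t,x)$, with $\partial_x\chi=\rho$ and $v(t,\chi(t,x))=1/\rho(t,x)$, which is precisely the ``method of substitution'' the paper invokes at $t=T^*$ in \eqref{ini.volume} and then extends to all $t$ without further comment. Your alternative dynamic verification via the transport theorem is a correct and slightly more careful supplement (the cancellation $v(t,-\eta_{out})\dot\eta_{out}=u_{out}$, coming from \eqref{eta.out}, is exactly right), but the static substitution is all the paper uses.
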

 \begin{lemma}
       There exists a strictly positive constant $C$ depending on the initial condition and on $u_{out}$ such that the strong solution $(\rho,u,b)$ satisfies the following energy inequality for almost $t\in [T^{\star},T_1]$ assuming $T_1$ is lower than some given $T_1>T^*$. 
       \begin{multline} \label{EE}
           \int_{-\eta_{out}(t)}^0 \left( \frac{\tilde{u}^2}{2}-Q(v) \right)\, {\rm d}y +\frac{|\dot{b}(t)|^2}{2} +\frac{K}{2} |b(t)-b_b|^2 \\+  \int_{T^*}^t\int_{-\eta_{out}(t)}^0 \left|\frac{\mu}{v(s,y)} \tilde{u}^2_y(t,y) \right|dy +\int_{T^*}^t\frac{-\tilde{u}_{out}(s)}{\gamma-1} \frac{1}{v^{\gamma}(s,0)}\, {\rm d}s\leq C,
       \end{multline}
where $Q(v) = -\int_v^\infty q(z)\, {\rm d}z = \frac 1{1-\gamma}\frac 1{v^{\gamma-1}}$.
\end{lemma}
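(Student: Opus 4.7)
The plan is to derive the identity by testing the Lagrangian momentum equation $\tilde u_t=\sigma_y$ (where $\sigma=\mu \tilde u_y/v-q(v)$) against $\tilde u$ on the moving interval $(-\eta_{out}(t),0)$, and then to treat separately the sole boundary term that is not a priori controlled. I would apply the Reynolds transport theorem with an auxiliary $C^1$ function $w$ satisfying $w(t,-\eta_{out}(t))=-\dot\eta_{out}(t)$ and $w(t,0)=0$, exactly as was done for the inflow case. Integration by parts in $y$ produces: (i) the dissipation $\int_{-\eta_{out}}^0\mu \tilde u_y^2/v\, {\rm d}y$; (ii) the boundary contribution $[\sigma\tilde u]_{-\eta_{out}}^0=\sigma(t,0)\dot b(t)-\sigma(t,-\eta_{out})u_{out}(t)$; (iii) a pressure term $\int q \tilde u_y\, {\rm d}y=\int Q_t\, {\rm d}y$ which, by transport and the continuity equation $v_t=\tilde u_y$, equals $\partial_t\int Q\, {\rm d}y - Q(t,-\eta_{out})\dot\eta_{out}$; and (iv) a convective correction $\tfrac12 u_{out}^2\dot\eta_{out}$.

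Next, I would close the piston boundary using the ODE \eqref{55}$_3$, which rewrites $\sigma(t,0)\dot b$ as $-\partial_t\bigl(\tfrac12 \dot b^2+\tfrac K2 (b-b_b)^2\bigr)-l\dot b^2$. A short computation using $\dot\eta_{out}=u_{out}/v(t,-\eta_{out})$ together with the explicit form $Q(v)=-v^{1-\gamma}/(\gamma-1)$ shows
$$-Q(t,-\eta_{out})\dot\eta_{out}=\frac{u_{out}(t)}{(\gamma-1)\,v^{\gamma}(t,-\eta_{out})}\leq 0,$$
so that, once moved to the left, it yields precisely the last dissipative term in the statement.

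The main obstacle is the remaining boundary contribution $-\sigma(t,-\eta_{out})u_{out}$: the viscous trace $\mu \tilde u_y/v$ at the outflow is not a priori bounded, and unlike the inflow case the density there is part of the unknown. I would resolve this by a second test, mimicking the inflow strategy: integrate the momentum equation over $(-\eta_{out}(t),0)$ and multiply by the spatially constant factor $u_{out}(t)$, obtaining
$$u_{out}(t)\int_{-\eta_{out}}^0\tilde u_t\, {\rm d}y = u_{out}(t)\bigl(\sigma(t,0)-\sigma(t,-\eta_{out})\bigr),$$
and, by transport again,
$$u_{out}(t)\int_{-\eta_{out}}^0\tilde u_t\, {\rm d}y = \partial_t\!\left(u_{out}(t)\int_{-\eta_{out}}^0\tilde u\, {\rm d}y\right) - \dot u_{out}(t)\int_{-\eta_{out}}^0 \tilde u\, {\rm d}y - u_{out}^2(t)\,\dot\eta_{out}(t).$$
Comparing, $\sigma(t,-\eta_{out})u_{out}$ is expressed as $\sigma(t,0)u_{out}$ plus a perfect time derivative (to be absorbed into the energy functional) plus remainders involving $\dot u_{out}\int\tilde u\, {\rm d}y$ and $u_{out}^2\dot\eta_{out}$ that are controlled by the current energy. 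The surviving $\sigma(t,0)u_{out}=-(\ddot b+l\dot b+K(b-b_b))u_{out}$ is tamed by integrating $u_{out}\ddot b$ by parts in time, trading it for $\dot u_{out}\dot b$, and by Young's inequality.

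Finally, I would integrate the resulting identity on $[T^*,t]$ and regroup every perfect time derivative on the left together with the dissipations $\int \mu\tilde u_y^2/v\, {\rm d}y$, $l\dot b^2$, and the outflow dissipation. The right-hand side is then a linear combination of lower-order terms linear in the energy plus quantities explicit in the data $u_{out}$, $\dot u_{out}$ and initial values, which is closed by Gr\"onwall's lemma. The conceptual bottleneck throughout is exactly the uncontrolled viscous flux at the free outflow end, which is bypassed by the duality trick of testing with the boundary value $u_{out}(t)$ extended as a constant-in-$y$ function; once this is done, the remainder of the proof proceeds in parallel with the inflow computation presented in Section \ref{s2}.
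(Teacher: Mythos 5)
Your proposal tracks the paper's proof step for step: the same transport-theorem test against $\tilde u$ with the auxiliary $w$, the same use of the piston ODE at $y=0$ and the potential $Q$ for the pressure term, and crucially the same duality trick of testing the momentum equation against the spatially constant $u_{out}(t)$ (this is the paper's identity \eqref{44}) to trade the uncontrolled viscous trace $\sigma(t,-\eta_{out})u_{out}$ for controlled terms, all closed by Young and Gr\"onwall. There are no meaningful differences.
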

Let us recall that $\tilde u_{out}$ and $Q(s)$ are negative by definition and, therefore, all the terms on the right hand side of \eqref{EE} are positive.
       \begin{proof}
We rewrite the system as
        \begin{equation}\label{ss}
                \left \{ \begin{array}{lrl}
                v_t=\tilde{u}_y \\
                \tilde{u}_t=\sigma_y  
       \end{array}\right.
        \end{equation} 
where
        \begin{equation}
            \sigma(t,y)= \frac{\mu}{v}\tilde{u}_y -p\left(\frac{1}{v}\right)=\frac{\mu}{v}\tilde{u}_y - q(v),
        \end{equation}
       We start by using the transport theorem to get 
           \begin{equation*}
               \partial_t \left( \frac{1}{2} \int_{-\eta_{out}(t)}^0 \tilde{u}^2(t,y) \, {\rm d}y \right)= \frac{1}{2}\left(\int_{-\eta_{out}(t)}^0 (\tilde{u}^2)_t(t,y) + \partial_{y}( \tilde{u}^2 w )(t,y)\, {\rm d}y\right)
           \end{equation*}
     We integrate by parts with respect to the space variable and we take into account \eqref{ss} to obtain 
           \begin{multline*}
              \frac{1}{2}\left( \int_{-\eta_{out}(t)}^0 (\tilde{u}^2)_t(t,y)\, {\rm d}y \right)= \int_{-\eta_{out}}^0 \tilde{u} (t,y) \sigma_y\, {\rm d}y,\\
               =-\int_{-\eta_{out}(t)}^0 \tilde{u}_y(t,y) \sigma\, {\rm d}y +u(t,0)\sigma(t,0)-\tilde{u}(t,-\eta_{out}(t))\sigma(t,-\eta_{out}(t)).
           \end{multline*}
By
\begin{equation*}
    \sigma(t,0)= -\ddot{b}(t)-k(b(t)-b_b)-l\dot{b}(t)
\end{equation*}
we obtain
\begin{multline}\label{002}
   \frac{1}{2} \left(\int_{-\eta_{out}(t)}^0 (\tilde{u}^2)_t(t,y)\, {\rm d}y\right) = -\int_{-\eta_{out}(t)}^0 \tilde{u}_y \sigma (t,y)\, {\rm d}y - \frac{1}{2}\frac{d}{dt} \left[|\dot{b}(t)|^2-k|b(t)-b_b|^2\right]- l\dot{b}^2(t) \\
   -\tilde{u}(t,-\eta_{out}(t)) \sigma (t,-\eta_{out}(t)).
\end{multline}
On the other hand, we have 
\begin{equation*}
    \frac{1}{2}\int_{-\eta_{out}(t)}^0 (\tilde{u}^2(t,y) w(t,y))_y\, {\rm d}y = \frac{1}{2} \left( \tilde{u}^2(t,0) w(t,0) - \tilde{u}^2(t,-\eta_{out}(t)) w(t,-\eta_{out}(t))\right),
\end{equation*}
and thus
\begin{equation}\label{001}
   \frac{1}{2}\left(\int_{-\eta_{out}(t)}^0 (\tilde{u}^2(t,y) w(t,y))_y\, {\rm d}y\right)= \frac{\tilde{u}_{out}^3(t)}{2v(t,-\eta_{out}(t))}, 
\end{equation}
We use \eqref{001} and \eqref{002} to get
\begin{multline*}
   \frac{1}{2} \partial_t\left[\int_0^{b(t)} (\tilde{u}^2(t,y))\, {\rm d}y + |\dot{b}(t)|^2+ k|b(t)-b_b|^2\right] +l\dot{b}(t)^2\\
  = \frac{\tilde{u}_{out}^3(t)}{2v(t,\eta_{out}(t)}-\int_{-\eta_{out}(t)} \tilde{u}_y \sigma(t,y)\, {\rm d}y - \tilde{u}(t,-\eta_{out}(t)) \sigma(t,-\eta_{out}(t)).
\end{multline*}
This yields
\begin{multline}\label{012}
    \frac{1}{2} \partial_t\left[\int_0^{b(t)} (\tilde{u}^2(t,y))\, {\rm d}y + |\dot{b}(t)|^2+ k|b(t)-b_b|^2\right] +l\dot{b}(t)^2\\ 
    =\frac{u_{out}^3(t)}{2v(t,\eta_{out}(t)}-\int_{-\eta_{out}(t)}^0 \frac{\mu}{v(t,y)}(\tilde{u}_y(t,y))^2\, {\rm d}y+ \int_{-\eta_{out}(t)}^0 q(v) \tilde{u}_y(t,y)\, {\rm d}x  \\ - \tilde{u}(t,-\eta_{out}(t)) \sigma(t,-\eta_{out}(t)). 
\end{multline}
Recall $Q$ is the primitive of $q$ with respect to $v$ thus we have 
\begin{equation}\label{011}
    \int_{-\eta_{out}(t)}^0 q(v) \tilde{u}_y\, {\rm d}y = \frac{d}{dt} \int_{\eta_{out}(t)}^0 Q(v)\, {\rm d}y - \frac{Q(v)}{v}(t,-\eta_{out}(t)).\tilde{u}_{out}(t).
\end{equation}
We combine \eqref{012}  and \eqref{011}, we obtain 
\begin{multline} \label{45}
     \frac{1}{2} \partial_t\left[\int_0^{b(t)} (\tilde{u}^2(t,y)) -Q(v)\, {\rm d}x + |\dot{b}(t)|^2+ k|b(t)-b_b|^2\right] +l\dot{b}(t)^2 \\ =\frac{\tilde{u}_{out}^3(t)}{2v(t,\eta_{out}(t))} - \tilde{u}(t,-\eta_{out}(t)) \sigma(t,-\eta_{out}(t))-  \frac{Q(v)}{v}(t,-\eta_{out}(t)).\tilde{u}_{out}(t).
\end{multline}
We multiply now, the equation \eqref{ss} by $-\tilde{u}_{out}(t)$
\begin{equation*}
    \int_{-\eta_{out}(t)}^0 \tilde{u}_t \tilde{u}_{out}(t)\, {\rm d}y = \int_{-\eta_{out}(t)}^0 \sigma_y \tilde{u}_{out}(t)\, {\rm d}y,  
\end{equation*}
we use the fact that
\begin{multline*}
 \tilde{u}_{out}(t) \int_{-\eta_{out}(t)}^0 \tilde{u}_t(t,y) \, {\rm d}x=  \tilde{u}_{out}(t)\left(\partial_t \left(\int_{\eta_{out}(t)}^0 \tilde{u} (t,y)\, {\rm d}y \right)  - \int_{\eta_{out}(t)}^0(\tilde{u}.w)_y(t,y)\, {\rm d}y \right ) \\
  = \tilde{u}_{out}(t)\left(\partial_t \left(\int_{\eta_{out}(t)}^0 \tilde{u} (t,y)\, {\rm d}y \right) -\tilde{u}(t,0)w(t,0)+ \tilde{u}(t,-\eta_{out}(t))w(t,-\eta_{out}(t)) \right) \\ = \tilde{u}_{out}(t) \partial_t \left(\int_{\eta_{out}(t)}^0 \tilde{u} (t,y)\, {\rm d}y \right) -\frac{\tilde{u}_{out}(t)^3}{v(t,-\eta_{out}(t))},
\end{multline*}
on the other hand, we have 
\begin{equation*}
    \tilde{u}_{out}(t) \int_{-\eta_{out}(t)}^0 \sigma_y(t,y)\, {\rm d}y = \tilde{u}_{out}(t)(\sigma(t,0)-\sigma(t,-\eta_{out}(t)))
\end{equation*}
and consequently, we obtain 
\begin{equation}\label{44}
\tilde{u}_{out}(t) \frac{d}{dt} \left(\int_{\eta_{out}(t)}^0 \tilde{u} (t,y)\, {\rm d}y \right) -\frac{u_{out}(t)^3}{v(t,-\eta_{out}(t))}= \tilde{u}_{out}(t)(\sigma(t,0)-\sigma(t,-\eta_{out}(t)))
\end{equation}
At this level we add \eqref{45} and \eqref{44}, we obtain 
\begin{multline*}
    \frac{1}{2}\partial_t \left[ \int_{-\eta_{out}(t)}^0 \left(\tilde{u}^2(t,y)-Q(v)\right)\, {\rm d}y +|\dot{b}(t)|^2 +k|b(t)-b_b|^2\right] +l(\dot{b}(t))^2 +\frac{Q(v)}{v}(t,-\eta_{out}(t))\tilde{u}_{out}(t) \\ +\int_{-\eta_{out}(t)}^0 \frac{\mu}{v(t,x)} |\tilde{u}_y(t,y)|^2\, {\rm d}y= \tilde{u}_{out}(t) \left( \partial_t \int_{-\eta_{out}(t)}^0 \tilde{u}(t,y)\, {\rm d}y \right)- \frac{\tilde{u}_{out}^3(t)}{2v(t,-\eta_{out}(t))} \\ + \tilde{u}_{out}(t)\left[\ddot{b}(t)+k(b(t)-b_b)+l\dot{b}(t)\right]
\end{multline*}
We integrate all the terms in the above formula with respect to time and we get
\begin{multline*}
    \frac{1}{2}\left(\int_{-\eta_{out}(t)}^0 \left(\tilde{u}^2(t,y)-Q(v)\, {\rm d}y\right)+|\dot{b}(t)|^2 + K|b(t)-b_b|^2\right) \\ +l \int_{T^{\star}}^t \dot{b}^2(s)\, {\rm d}s +\int_{0}^t \int_{-\eta_{out}(t)}^0 \frac{\mu}{v} |\tilde{u}_y(t,y)|^2\, {\rm d}y +\int_{0}^t \frac{Q(v)}{v} \tilde{u}_{out}(s)ds \\= -\int_{T^{\star}}^t \frac{\tilde{u}_{out}^3(s)}{2v(s,-\eta_{out}(s))}\, {\rm d}s +\left[\tilde{u}_{out}(s) \dot{b}(s)\right]^{s=t}_{t=0}+\tilde{u}_{out}(0)b_b-  \tilde{u}_{out}(t)b(t) \\ +\int_{T^{\star}}^t \tilde{u}_{out}(s) (k(b(s)-b_b)-l\dot{b}(s))\, {\rm d}s +\left[\tilde{u}_{out}(t)(s) \int_{-\eta_{out}(s)}^ 0 \tilde{u}(s,y)dy\right]^{s=t}_{s=0} \\ -\int_{0}^t \partial_s\tilde{ u}_{out}(s) \int_{-\eta_{out}(t)}^0 \tilde{u}(s,y)\, {\rm d}y\, {\rm d}s
\end{multline*}
which may be rewritten as
\begin{multline*}
       \frac{1}{2}\left(\int_{-\eta_{out}(t)}^0 \left(\tilde{u}^2(t,y)-Q(v)\, {\rm d}y\right)+|\dot{b}(t)|^2 + K|b(t)-b_b|^2\right)  \\ +l \int_{T^{\star}}^t \dot{b}^2(s)\, {\rm d}s +\int_{0}^t \int_{-\eta_{out}(t)}^0 \frac{\mu}{v} |\tilde{u}_y(t,y)|^2\, {\rm d}y +\int_{0}^t \frac{-\tilde{u}_{out}(s)}{\gamma-1} \frac{1}{v^{\gamma}}ds  \\= -\int_{T^{\star}}^t \frac{\tilde{u}_{out}^3(s)}{2v(s,-\eta_{out}(s))}\, {\rm d}s +\left[\tilde{u}_{out}(s) \dot{b}(s)\right]^{s=t}_{s=T^*}+\tilde{u}_{out}(0)b_b-  \tilde{u}_{out}(t)b(t) \\ +\int_{T^{\star}}^t \tilde{u}_{out}(s) (k(b(s)-b_b)-l\dot{b}(s))\, {\rm d}s +\left[\tilde{u}_{out}(t)(s) \int_{-\eta_{out}(s)}^ 0 \tilde{u}(s,y)dy\right]^{s=t}_{s=0} \\ -\int_{T^*}^t \partial_s \tilde{u}_{out}(s) \int_{-\eta_{out}(t)}^0 \tilde{u}(s,y)\, {\rm d}y\, {\rm d}s
\end{multline*}
    We next estimate the terms in the right side using the Young inequality, we obtain the following estimations 
    \begin{multline}\label{41}
        \left[\tilde{u}_{out}(s)\dot{b}(s)\right]^{s=t}_{s=T^{\star}} \leq  4 |\tilde{u}_{out}(t)|^2+ \frac{|\dot{b}(t)|^2}{4}- \tilde{u}_{out}(T^*)\dot{b}(T^*)\\
        \leq C \left(\|\tilde{u}_{out}\|^2_{L^{\infty}(T^{\star},T_1)}+1\right)+ \frac{|\dot{b}(t)|^2}{4},
    \end{multline}
    \begin{multline}\label{5}
        \left|\int_{T^*}^t \tilde{u}_{out}(s) (k(b(s)-b_b)-l\dot{b}(s))\, {\rm d}s \right|\\ \leq C\|\tilde{u}_{out}(t)\|^2_{L^2(T^{\star},T_1)}+k^2 \int_{T^*}^t|b(s)-b_b|^2\, {\rm d}s +\frac{l}{2}\int_{T^{\star}}^t \dot{b}(s)^2 \, {\rm d}s,
    \end{multline}
    \begin{equation}\label{6}
        \left[\tilde{u}_{out}(t)(s) \int_{-\eta_{out}(s)}^ 0 \tilde{u}(s,y)\, {\rm d}y\right]^{s=t}_{s=T^*} \leq C \|u_{out}\|^2_{L^{\infty}(T^{\star},T_1)}+\frac{1}{8} \int_{-\eta_{out}(t)}^0 \tilde{u}^2(s,y)\, {\rm d}y,
    \end{equation}
    and
    \begin{equation}\label{7}
      \left | -\int_{T^{\star}}^t \partial_{s} \tilde{u}_{out}(s) \int_{-\eta_{out}(s)}^0 \tilde{u}(s,y)\, {\rm d}y\, {\rm d}s \right| \leq \int_{T^*}^t\left( |\partial_t \tilde{u}_{out}(s)|^2+\int_{-\eta_{out}(t)}^0 \tilde{u}^2(s,y)\, {\rm d}y \right)\, {\rm d}s 
    \end{equation}
  We gather \eqref{41}, \eqref{5} \eqref{6} and \eqref{7} and we conclude that there exists $C$ dependent on initial conditions such that for every $t\in [T^{*}, T_1]$ we have 
    \begin{multline*}
\left(\int_{-\eta_{out}(t)}^0 \left(\frac{3}{8}\tilde{u}^2(t,y)-Q(v)\, {\rm d}y\right)+\frac{1}{4}|\dot{b}(t)|^2 + \frac{k}{2}|b(t)-b_b|^2\right)  \\ +\frac{l}{2} \int_{T^{\star}}^t \dot{b}^2(s)\, {\rm d}s +\int_{T^{*}}^t \int_{-\eta_{out}(t)}^0 \frac{\mu}{v} |\tilde{u}_y(t,y)|^2\, {\rm d}y +\int_{T^{\star}}^t \frac{-u_{out}(s)}{\gamma-1} \frac{1}{v^{\gamma}}ds\\ \leq C\left( 1+\int_{T^{\star}}^t |\tilde{u}_{out}(s)|^{\frac{3\gamma-1}{\gamma-1}}\, {\rm d}s +\|\tilde{u}_{out}\|_{L^{\infty}(T^{\star},T)}^2+\|\tilde{u}_{out}\|^2_{H^1(T^{\star},T_1)}+\int_{T^{\star}}^t |\dot{b}(s)|^2\, {\rm d}s\right)\\+\int_{T^{\star}}^t \int_{-\eta_{out}(t)}^0 \tilde{u}^2(s,y)\, {\rm d}y\, {\rm d}s +k^2\int_{T^*}^t |b(s)-b_b|^2\, {\rm d}s , 
    \end{multline*}
Finally, using Gronwall's Lemma, we deduce \eqref{EE} holds for $\gamma>1$.
       \end{proof}
\begin{remark}
All the following estimates are deduced under assumption $b(t)>0$. This can be ensured by the energy inequality and by additional assumption on the smallness of $T_0$. The precise estimate on the time of existence is given later in Section \ref{con.rem}.
\end{remark}
\begin{lemma}
We have \label{lower.bound.v}
$$
\left\|\frac 1 v\right\|_{L^\infty((T^*,T_2)\times (-\eta_{out}(t),0))}\leq C
$$
\end{lemma}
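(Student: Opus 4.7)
The plan is to transpose to the outflow regime the bootstrap already used for the inflow case in the proof of the previous theorem. Setting $M(v) = \mu \log v$, the Lagrangian continuity equation $v_t = \tilde u_y$ yields $\mu \tilde u_y/v = M(v)_t$, so the momentum equation in \eqref{55}$_2$ takes the form $\tilde u_t = M(v)_{ty} - q(v)_y$. I would first integrate this identity in $y$ over $[y,0]$ for a fixed $(t,y)\in [T^*,T_2]\times [-\eta_{out}(t),0]$ and then in time over $[T^*,t]$. Since $\dot\eta_{out} = u_{out}(t)\rho(t,0)\leq 0$ in the outflow regime, the map $s\mapsto -\eta_{out}(s)$ is non-decreasing and the horizontal Lagrangian slab $\{y = \text{const.}\}$ remains inside the domain for every $s\in[T^*,t]$, which makes this double integration legitimate.

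The key step is then to eliminate the boundary contribution $M(v(t,0)) - \int_{T^*}^t q(v(s,0))\,{\rm d}s$ produced at $y=0$. Writing the stress as $\sigma = M(v)_t - q(v)$ and using the spring ODE $\sigma(t,0) = -\ddot b - l\dot b - K(b-b_b)$, a single time integration converts that boundary term into the explicit expression $-(\dot b(t)-\dot b(T^*)) - l(b(t)-b(T^*)) - K\int_{T^*}^t(b(s)-b_b)\,{\rm d}s$, all three terms being uniformly controlled by the energy inequality \eqref{EE}.

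After substitution the identity can be recast as
\begin{equation*}
-M(v(t,y)) + \int_{T^*}^t q(v(s,y))\,{\rm d}s = \mathcal{R}(t,y),
\end{equation*}
where the remainder $\mathcal{R}$ collects $\int_y^0 (\tilde u(t,s) - \tilde u(T^*,s))\,{\rm d}s$, the term $M(v(T^*,y))$ (finite thanks to $1/\tilde v(T^*,\cdot)\in L^\infty$ from \eqref{ini.outflow}) and the spring-type quantities from the previous paragraph. Since $q(v) = v^{-\gamma}\geq 0$, the integral on the left is nonnegative and can be dropped, yielding $-\mu\log v(t,y)\leq C$, equivalently $v(t,y)\geq e^{-C/\mu}$ uniformly in $(t,y)$, which is the claim.

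The only point requiring some attention is ensuring that all bounds on $\mathcal{R}$ are uniform in $T_2$ as long as $T_2$ stays in a bounded interval; this is automatic from \eqref{EE} together with the standing assumption $b(t)>0$, so no substantial obstacle is anticipated — the argument is essentially a replay of the one already completed for the inflow case, with the initial time $0$ replaced by $T^*$ and the initial data replaced by the traces $(\tilde u^*, v^*)$.
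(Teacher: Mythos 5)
Your proposal follows essentially the same route as the paper's own proof: you set $M(v)=\mu\log v$, rewrite the momentum equation as $\tilde u_t=(M(v))_{ty}-(q(v))_y$, integrate over the space-time rectangle $(T^*,t)\times(y,0)$, use the piston ODE to rewrite the boundary contribution $M(t,0)-\int_{T^*}^t q(s,0)\,{\rm d}s$ in terms of $\dot b$, $b$ and $\int(b-b_b)$, and then drop the nonnegative term $\int_{T^*}^t q(s,y)\,{\rm d}s$ to conclude $-\mu\log v(t,y)\leq C$. This matches the paper step for step; your remark that $-\eta_{out}(\cdot)$ is non-decreasing (so the fixed horizontal slab $\{y=\mathrm{const.}\}$ stays inside the Lagrangian domain for all earlier times) is a small but genuine clarification that the paper takes for granted when integrating over the rectangle.
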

\begin{proof}
We set $M(v)$ to be a potential of $\frac \mu v$, i.e. $M(v) = \mu \log v$ . The momentum equation takes the form
$$
\tilde u_t + (q(v))_y = (M(v))_{ty}.
$$
We integrate this equation with respect to time and space over a set $(T^*,t_0)\times (y_0,0)$ where $t_0\in (T^*,T^1]$ and $y_0\in (-\eta_{out},0)$ are arbitrary. We arrive at (below we skip the dependence on $\tilde u$ and $v$ to keep the notation concise)
\begin{multline*}
\int_{y_0}^0 \tilde u(t_0,y)\, {\rm d}dy - \int_{y_0}^0 \tilde u(T^*,y)\, {\rm d}y + \int_{T^*}^t q(t,0)\, {\rm d}t -  \int_{T^*}^t q(t,y_0)\, {\rm d}t\\ = M(t_0,0) - M(t_0,y_0) - M(T^*,0) + M(T^*,y_0)
\end{multline*}
and we rearrange it as 
\begin{multline}\label{bound.on.M}
 -M(t_0,y_0) = -M(t_0,0) +  M(T^*,0) - M(T^*,y_0)\\ + \int_{T^*}^{t_0} q(t,0)\, {\rm d}t - \int_{T^*}^{t_0}q(t,y_0)\, {\rm d}t + \int_{y_0}^0 u(T^*,y) - u(t_0,y)\, {\rm d}y
\end{multline}
The boundary condition of \eqref{0101} yields 
$$
q(t,0) = M_t(t,0) + \ddot b(t) + l\dot b(t) + K(b(t) - b_0))
$$
and thus
$$
\int_{T^*}^{t_0} q(t,0)\, {\rm d}t = M(t_0,0) - M(T^*,0) + |\dot b(t_0)| - |\dot b(T^*)|  + l|b(t_0)| - l|b(T^*)| + \int_{T^*}^{t_0}K(b(t) - b_0)\, {\rm d}t.
$$
Since $q$ is positive by definition, the sum of the right hand side of \eqref{bound.on.M} might be estimated from above by energy and thus
$$
M(t_0,y_0)\geq -C>-\infty
$$
and the definition of $M$ yields the demanded claim.
\end{proof}
\begin{lemma} \label{lemme.v}
We have $v\in L^\infty((T^*,T_2,H^1(-\eta_{out},0))$ and, consequently, $v\in L^\infty((T^*,T^2)\times (-\eta_{out}(t),0))$. 
\end{lemma}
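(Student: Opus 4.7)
The plan is to construct an effective viscous flux--type quantity $\Psi := \mu v_y/v - \tilde u$, deduce a pointwise--in--$y$ linear ODE for it, and estimate $\Psi$ in $L^2$ by a Reynolds transport theorem argument, using only the lower bound from Lemma \ref{lower.bound.v} and the energy inequality \eqref{EE}.

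First, I combine the continuity $v_t = \tilde u_y$ and momentum equations of \eqref{ss} to obtain a closed identity for $v_y$ and $\tilde u$. Writing $\mu \tilde u_y/v = \mu v_t/v = (\mu \log v)_t$, the momentum equation differentiated in $y$ yields
\begin{equation*}
    \left(\frac{\mu v_y}{v}\right)_t = \tilde u_t + q(v)_y.
\end{equation*}
Since $q(v)_y = -\gamma v^{-\gamma-1} v_y = -\frac{\gamma}{\mu v^\gamma}\cdot\frac{\mu v_y}{v}$, setting $\psi := \mu v_y/v$ gives the pointwise ODE $\psi_t + \frac{\gamma}{\mu v^\gamma}\psi = \tilde u_t$. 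The substitution $\Psi := \psi - \tilde u$ absorbs the inconvenient $\tilde u_t$ into the damping and leaves
\begin{equation*}
    \Psi_t + \frac{\gamma}{\mu v^\gamma}\Psi = -\frac{\gamma \tilde u}{\mu v^\gamma},
\end{equation*}
whose right--hand side involves only quantities already controlled.

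Next, I multiply by $\Psi$ and integrate over the moving interval $(-\eta_{out}(t),0)$ using the Reynolds transport theorem. The boundary term at $y=0$ vanishes because $w(t,0)=0$; at $y=-\eta_{out}(t)$ it equals $\tfrac12 \dot\eta_{out}(t)\Psi^2(t,-\eta_{out}(t))$ and is non--positive thanks to the outflow condition $\dot\eta_{out}=u_{out}\rho(t,0)\leq 0$, so it may be discarded. The damping term $\int \gamma v^{-\gamma}\Psi^2/\mu$ has the correct sign and can also be dropped. Lemma \ref{lower.bound.v} provides $v^{-\gamma}\leq C$, while $\|\tilde u\|_{L^\infty(T^*,T_2;L^2)}\leq C$ follows from \eqref{EE}, so Young's inequality yields
\begin{equation*}
    \tfrac{d}{dt}\|\Psi\|_{L^2}^2 \leq C(1+\|\Psi\|_{L^2}^2),
\end{equation*}
and Gronwall gives $\Psi\in L^\infty(T^*,T_2;L^2)$ on any bounded time interval.

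To turn the resulting bound $\psi=\mu v_y/v\in L^\infty(L^2)$ into a bound on $v_y$ itself, I still need an upper bound on $v$. For this I rerun the argument of Lemma \ref{lower.bound.v}: integrating $(\mu\log v)_{ty}=\tilde u_t+q_y$ first in $y$ from $y_0$ to $0$ and then in $t$ from $T^*$ to $t_0$, eliminating $\int_{T^*}^{t_0} q(t,0)\,{\rm d}t$ by the spring boundary condition, and using the pointwise bound $q=v^{-\gamma}\leq C$ from Lemma \ref{lower.bound.v} to control the remaining $\int_{T^*}^{t_0} q(t,y_0)\,{\rm d}t$, I obtain $\mu\log v(t_0,y_0)\leq C(1+T_2-T^*)$. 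Hence $v\leq C$, which combined with the $L^2$ bound on $v_y/v$ yields $v_y\in L^\infty(L^2)$ and therefore $v\in L^\infty(T^*,T_2;H^1(-\eta_{out}(t),0))$. The one--dimensional Sobolev embedding $H^1\hookrightarrow L^\infty$ then delivers the pointwise conclusion $v\in L^\infty((T^*,T_2)\times(-\eta_{out}(t),0))$.

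The main obstacle I expect is the absence of any boundary condition on $v$ at the free outflow boundary $y=-\eta_{out}(t)$. The choice of $\Psi$ is dictated precisely by this: the energy estimate for $\Psi$ requires no integration by parts in $y$, and hence no trace of $v_y$ at the moving boundary, while the transport--theorem boundary term has the favorable sign $\dot\eta_{out}\leq 0$ characteristic of the outflow regime.
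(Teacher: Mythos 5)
Your proposal is correct and follows essentially the same route as the paper. Your quantity $\Psi=\mu v_y/v-\tilde u$ is exactly $-(\tilde u-(M(v))_y)$ with $M(v)=\mu\log v$, your pointwise ODE $\Psi_t+\frac{\gamma}{\mu v^\gamma}\Psi=-\frac{\gamma\tilde u}{\mu v^\gamma}$ is the paper's relation $(\tilde u-(M(v))_y)_t=-(q(v))_y$ after substituting $(q(v))_y=-\frac{\gamma}{\mu v^\gamma}(M(v))_y$, and the transport-theorem energy estimate with the favorable sign of $\dot\eta_{out}\le 0$ plus Gr\"onwall is identical. The one place where you go beyond the paper's write-up is also its weakest point: Gr\"onwall only gives $\mu v_y/v\in L^\infty(L^2)$, and to convert that into $v_y\in L^\infty(L^2)$ (and indeed into $v\in L^2$) one needs the upper bound $v\le C$; the paper's proof ends at ``\dots and the Gronwall inequality yields the demanded claim'' without supplying that bridge. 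Your way of closing the gap --- rerunning the integration argument of Lemma~\ref{lower.bound.v} with the opposite sign, using the spring boundary condition to eliminate $\int q(t,0)\,{\rm d}t$ and the already-proved bound $q=v^{-\gamma}\le C$ to control $\int q(t,y_0)\,{\rm d}t$ --- is valid and self-contained. An equally quick alternative, staying closer to what the paper already has in hand, is to note that $\int_{-\eta_{out}}^0 v\,{\rm d}y=b(t)\le C$ forces $v(t,y^*)\le C$ at some $y^*$ by averaging, and then $\log v(t,y)=\log v(t,y^*)+\int_{y^*}^y(\log v)_z\,{\rm d}z\le C+\sqrt{\eta_{out}}\|(\log v)_z\|_{L^2}\le C$ since $(\log v)_y=v_y/v\in L^\infty(L^2)$ is precisely what Gr\"onwall just produced. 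Either way, your instinct that the upper bound on $v$ must be established separately is right, and your argument is complete.
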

\begin{proof}
We use the potential $M$ defined in the previous proof. We have $(M(v))_t = \frac \mu v v_t  = \frac \mu v \tilde u_y$. The momentum equation can be thus rewritten as
\begin{equation}\label{v.potential}
(\tilde u - (M(v))_y)_t = -(q(v))_y
\end{equation}
We use the transport theorem and the function $w$ to compute as follows
\begin{multline*}
\partial_t\frac 12\int_{-\eta_{out}}^0 |\tilde u - (M(v))_y|^2\, {\rm d}y\\
= \int_{-\eta_{out}}^0 (\tilde u - (M(v))_y)_t(\tilde u - (M(v))_y)\, {\rm d}y + \int_{-\eta_{out}}^0\left(\frac 12|\tilde u - (M(v))_y|^2 w\right)_y\, {\rm d}y\\
= - \int_{-\eta_{out}}^0 (q(v))_y (\tilde u - (M(v))_y)\, {\rm d}y - \left(\frac 12|\tilde u - (M(v))_y|^2 w\right)(-\eta_{out})
\end{multline*}
We have $$(q(v))_y = -\gamma \frac 1{v^{\gamma+1}}v_y = -\frac\gamma{\mu v^\gamma} \frac \mu v v_y = -\frac \gamma{\mu v^\gamma}(M(v))_y$$ by definition and, therefore, \eqref{v.potential} yields (recall $w(-\eta_{out})$ is positive)
\begin{multline*}
\frac 12\partial_t\int_{-\eta_{out}}^0 |\tilde u - (M(v))_y|^2\, {\rm d}y\leq \frac \gamma \mu\int_{-\eta_{out}}^0 \frac 1{v^\gamma} (M(v))_y(\tilde u - (M(v))_y)\, {\rm d}y \\
\leq -\frac \gamma \mu\int_{-\eta_{out}}^0 \frac 1{v^\gamma} (\tilde u - M(v))(\tilde u - (M(v))_y)\, {\rm d}y + -\frac \gamma \mu\int_{-\eta_{out}}^0 \frac 1{v^\gamma} \tilde u(\tilde u - (M(v))_y)\, {\rm d}y.
\end{multline*}
Since $\frac 1 v$ is in $L^\infty((T^*,T_2)\times (-\eta_{out}(t),0))$ and $\tilde u\in L^\infty(T^*,T_2,L^2(-\eta_{out}(t),0))$, we deduce
$$
\partial_t\int_{-\eta_{out}}^0 |\tilde u - (M(v))_y|^2\, {\rm d}y\leq C \left(1+ \int_{-\eta_{out}}^0 |\tilde u - (M(v))_y|^2\, {\rm d}y \right)
$$
and the Gronwall inequality yields the demanded claim.
\end{proof}
       \begin{lemma}
           There exists a constant $C>0 $
          such that $$\|\tilde{u}_y(t,.)\|^2_{L^2(-\eta_{out}(t),0)}+\int_{0}^t \|\tilde{u}_{yy}(s,.)\|^2_{L^2(-\eta_{out}(t),0)}\, {\rm d}s  \leq C, \ \ \forall t\in [T^{\star},T_2],$$
       \end{lemma}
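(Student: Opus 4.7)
The plan is to test the momentum equation $\tilde u_t = \sigma_y$ (with $\sigma = \mu \tilde u_y/v - q(v)$) by $\tilde u_t$ and integrate over the moving interval $(-\eta_{out}(t),0)$, combining the result with the piston ODE in order to produce a nonnegative $|\ddot b|^2$ term and a derivative of $\int \tilde u_y^2/v$. After integration by parts in space and use of the Reynolds transport theorem with the auxiliary function $w$ introduced earlier, one expects to obtain
\begin{equation*}
\int_{-\eta_{out}}^0 |\tilde u_t|^2\, {\rm d}y + \bigl[\sigma \tilde u_t\bigr]_{y=0} - \bigl[\sigma \tilde u_t\bigr]_{y=-\eta_{out}} = \int_{-\eta_{out}}^0 \sigma \tilde u_{yt}\, {\rm d}y + \text{transport terms}.
\end{equation*}

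At $y=0$, the piston equation yields $\sigma(t,0) = -\ddot b - l\dot b - K(b-b_b)$ and $\tilde u_t(t,0) = \ddot b(t)$, so the boundary contribution produces $-|\ddot b|^2$ and a total time derivative of the spring energy $\tfrac l2 |\dot b|^2 + \tfrac K2 |b-b_b|^2$, both of which can be moved to the left hand side. For the remaining spatial integral, write $\int \sigma \tilde u_{yt} = \int \frac{\mu}{v}\tilde u_y \tilde u_{yt} - \int q(v)\tilde u_{yt}$; the first piece, together with $v_t = \tilde u_y$, produces $\tfrac{\mu}{2}\partial_t \int \tilde u_y^2/v$ up to a cubic error $\int \tilde u_y^3/v^2$, while the second is integrated by parts in time to yield a controllable term once one invokes Lemma \ref{lemme.v} for $v_y\in L^\infty_t L^2_y$ and Lemma \ref{lower.bound.v} for $1/v\in L^\infty$. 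The cubic error is handled by Gagliardo-Nirenberg $\|\tilde u_y\|_{L^\infty}^2 \leq C\|\tilde u_y\|_{L^2}\|\tilde u_{yy}\|_{L^2} + C\|\tilde u_y\|_{L^2}^2$ combined with Young's inequality to absorb a small fraction of $\|\tilde u_{yy}\|_{L^2}^2$ into the left hand side.

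The hard point will be the boundary term at $y=-\eta_{out}(t)$. Differentiating the constraint $\tilde u(t,-\eta_{out}(t)) = \tilde u_{out}(t)$ in time gives $\tilde u_t(t,-\eta_{out}) = \dot{\tilde u}_{out}(t) + \dot\eta_{out}(t)\, \tilde u_y(t,-\eta_{out})$, and since neither $\tilde u_y$ nor $\sigma$ has a prescribed trace at the outflow endpoint, the only way to close the estimate is to write the product in the form of a total time derivative plus lower-order terms (using the continuity equation $\tilde u_y = v_t$ to re-express the trace, and using the already-established boundedness of $v(t,-\eta_{out})$ and $1/v(t,-\eta_{out})$ together with $\tilde u_{out}\in H^1$) and then absorb the remaining pieces via Young's inequality into $\|\tilde u_t\|_{L^2}^2$ and the energy already controlled by the previous lemma.

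After collecting all terms, Gr\"onwall's lemma delivers the bound
$\|\tilde u_y(t,\cdot)\|_{L^2}^2 + \int_{T^*}^t \|\tilde u_t(s,\cdot)\|_{L^2}^2\, {\rm d}s \leq C$ uniformly on $[T^*,T_2]$. The $L^2_{t,y}$-bound on $\tilde u_{yy}$ then follows by algebraically inverting the momentum equation, namely
\begin{equation*}
\mu \frac{\tilde u_{yy}}{v} = \tilde u_t + q(v)_y + \mu \frac{\tilde u_y v_y}{v^2},
\end{equation*}
squaring, integrating, using $v, 1/v\in L^\infty$, $v_y\in L^\infty_t L^2_y$, and one more application of Gagliardo-Nirenberg with a small-$\delta$ absorption to control the term $\|\tilde u_y\|_{L^\infty}^2 \|v_y\|_{L^2}^2$. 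This yields the desired estimate.
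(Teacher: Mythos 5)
Your proposal is correct in spirit but takes a genuinely different route from the paper. The paper multiplies the momentum equation by $-\tilde u_{yy}$ and integrates in space, so that $\|\tilde u_{yy}\|_{L^2}^2$ arises directly on the left-hand side and the boundary terms at $y=0$ and $y=-\eta_{out}$ produce $-\tilde u_t(t,0)\tilde u_y(t,0)$ and $+\tilde u_t(t,-\eta_{out})\tilde u_y(t,-\eta_{out})$, which are then estimated by the piston ODE, Young's inequality, and the Sobolev embedding $\|\tilde u_y\|_{L^\infty}\leq C\|\tilde u_{yy}\|_{L^2}+C\|\tilde u_y\|_{L^2}$ followed by Gr\"onwall. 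You instead test by $\tilde u_t$, which gives $\|\tilde u_t\|_{L^2}^2$ on the left together with a coercive term from $\tfrac\mu 2\partial_t\int\tilde u_y^2/v$, and then recover the $\tilde u_{yy}$ bound a posteriori by inverting the equation $\mu\tilde u_{yy}/v=\tilde u_t+q_y+\mu v_y\tilde u_y/v^2$; both are standard parabolic smoothing estimates and the two quantities are equivalent modulo lower-order terms via the equation, so your closure works. Your trade-off: at $y=0$ you get the favorably signed $-|\ddot b|^2$ automatically (the paper instead has to estimate $\ddot b\tilde u_y(t,0)$ by hand), while at $y=-\eta_{out}$ you must use the chain rule $\tilde u_t(t,-\eta_{out})=\dot{\tilde u}_{out}+\dot\eta_{out}\tilde u_y(t,-\eta_{out})$, which produces a trace of $\tilde u_y^2$ that must be absorbed via Gagliardo--Nirenberg and a small piece of the $\tilde u_{yy}$ bound; closing this requires running the a-posteriori inversion step inside the differential inequality, which you should make explicit. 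On that point you are actually \emph{more} careful than the paper, whose estimate \eqref{09} tacitly writes $\tilde u_t(t,-\eta_{out})=(\tilde u_{out})_t$ and drops the $\dot\eta_{out}\tilde u_y$ piece. One small slip in your sketch: $K(b-b_b)\ddot b$ is not $\tfrac{K}{2}\partial_t|b-b_b|^2$ (that would be $K(b-b_b)\dot b$); it must instead be handled by Young's inequality and the already-established energy bound on $|b-b_b|$, which is harmless but should be stated correctly.
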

        \begin{proof}
           We multiply \eqref{0101}${_2}$ by $-\tilde{u}_{yy}$ and integrate with respect to space interval $[-\eta_{out}(t),0]$, we obtain 
           \begin{multline}\label{1997}
               -\int_{-\eta_{out}(t)}^0 \tilde{u}_t u_{yy}\, {\rm d}y+\int_{-\eta_{out}(t)}^0 \left(\frac{\mu }{v}\right)|\tilde{u}_{yy}|^2  dy=-\int_{-\eta_{out}(t)}^0 \mu \left(\frac{1}{v^2}\right) v_y \tilde{u}_y \tilde{u}_{yy} dy+  \int_{-\eta_{out}(t)}^0 (q(v))_y u_{yy}{\rm d}y 
         \end{multline}
        Integration by parts with respect to space yields 
        \begin{multline}\label{214}
- \int_{-\eta_{out}(t)}^0 \tilde{u}_t u_{yy}\, {\rm d}y
      =\frac{1}{2}\int_{-\eta_{out}(t)}^0 (|\tilde{u}_{y}|^2)_t  \, {\rm d}y + \tilde{u}_t(t,-\eta_{out}(t)).\tilde{u}_y(t,-\eta_{out}(t))-     \tilde{u}_t(t,0)\tilde{u}_y(t,0).
         \end{multline}
        By the boundary conditions
        \begin{align*}
            \tilde{u}_t(t,0)=\ddot{b}(t), \ \text{and} \ \tilde{u}(t,\eta_{out}(t))=\tilde{u}_{out}(t), 
        \end{align*}
         the formula \eqref{1997} becomes 
         \begin{multline}
        \frac{1}{2}\int_{-\eta_{out}(t)}^0 (|\tilde{u}_{y}|^2)_t  \, {\rm d}y  +\left(\tilde{u}_{out}(t)\right)_t\tilde{u}_y(t,-\eta_{out}(t))-     \left(\frac{d}{dt} \dot{b}(t)\right)\tilde{u}_y(t,0)  +\int_{-\eta_{out}(t)}^0 \left(\frac{\mu }{v}\right)|\tilde{u}_{yy}|^2   \, {\rm d}y  \\=-\int_{-\eta_{out}(t)}^0 \mu \left(\frac{1}{v^2}\right) v_y \tilde{u}_y \tilde{u}_{yy} dy+  \int_{-\eta_{out}(t)}^0 (q(v))_y \tilde{u}_{yy} {\rm d}y.
         \end{multline}
         now, using the energy estimate \eqref{EE}, the result of Lemma \ref{lower.bound.v} and the Young inequality, we deduce that for an arbitrary constant $\epsilon$ and $c(\epsilon)$, we have          \begin{align} \label{099}
            \ddot{b} (t)\tilde{u}_y(t,0) =\left(l\dot{b}(t)+k(b(t)-b_b)+\mu\frac{\tilde{u}_y}{v}(t,0)-q(v(t,0))\right)\tilde{u}_y(t,0) \nonumber\\ \leq c(\epsilon)+ c(\epsilon,\mu)\|\tilde{u}_y\|^2_{L^{\infty}(-\eta_{out(t)},0)},
       \end{align}
         and 
         \begin{equation} \label{09}
            (\tilde{u}_{out})_t\tilde{u}_y(t,-\eta_{out}(t)) \leq c(\epsilon) + \epsilon  \|\tilde{u}_y\|^2_{L^{\infty}(-\eta_{out(t)},0)}.
         \end{equation}
         On the other hand, the Sobolev embedding theorem yields
         \begin{equation}\label{0009}
             \|\tilde{u}_y\|_{L^{\infty}(-\eta_{out}(t)),0)} \leq C \|\tilde{u}_{yy}\|_{L^2(-\eta_{out}(t),0)}+ C \|\tilde{u}_y\|_{L^2(-\eta_{out}(t),0)}.
         \end{equation}
        We comeback to the formula \eqref{1997},  and taking into account \eqref{099}, \eqref{09} and \eqref{0009}, we get 
         \begin{multline}\label{157}
             \frac{1}{2}\int_{-\eta_{out}(t)}^0 (|\tilde{u}_{y}|^2)_t  \, {\rm d}y +c(\epsilon)+c(\epsilon,\mu) \|\tilde{u}_y\|_{L^{\infty}(-\eta_{out},0)}  + \left\| \frac{\mu}{v}\right\|_{L^{\infty}(-\eta_{out},0) }\|\Tilde{u}_{yy}\|_{L^2(-\eta_{out}(t),0)} \\
            + \left\|\frac{\mu}{v^2}(t,.)\right\|_{L^{\infty}(-\eta_{out},0)}\left( c(\epsilon) \int_{-\eta_{out}(t)}^0 |v_y|^2|\Tilde{u}_y|^2 {\rm d}y+\epsilon\|\Tilde{u}_{yy}\|^2_{L^2(-\eta_{out}(t),0)}  \right) \\+\left\| \frac{\gamma}{v^{\gamma+1}}\right\|_{L^{\infty}(-\eta_{out},0)}\left(c(\epsilon) \int_{-\eta_{out}(t),0} |v_y|^2 {\rm d}y +\epsilon \|\Tilde{u}_{yy}\|^2_{L^2(-\eta_{out},0)}\right)  
         \end{multline}
Due to Lemma \ref{lower.bound.v} we get
          \begin{multline}\label{157}
             \frac{1}{2}\int_{-\eta_{out}(t)}^0 (|\tilde{u}_{y}|^2)_t  \, {\rm d}y +
             \int_{-\eta_{out}(t)}^0 |\Tilde{u}_{yy}|^2 \leq \epsilon \|\Tilde{u}_{yy}\|^2_{L^2(-\eta_{out})(t),0)} \\
            + c(\epsilon) \left(\|\Tilde{u}_y\|^2_{L^2(-\eta_{out}(t),0)}+\|\Tilde{u}\|^2_{L^2(-\eta_{out}(t),0)} + \int_{-\eta_{out}(t)}^0 |v_y|^2|\Tilde{u}_y|^2 {\rm d}y  +\|v_y\|^2_{L^2(-\eta_{out}(t),0)}\right).
         \end{multline}
Now, we integrate with respect to time over $[T^{\star},t]$ for all, $t\in [T^{\star},T_2]$ and using the Lemma \ref{lemme.v} and Gronwall's inequality, we obtain the desired estimate.
       \end{proof}
Due to the previous bounds, we are able to apply Theorem \ref{outflow.local} with initial time $T_2$ and we may extend the solution beyond the interval. As a result of this ideas, we state the following theorem

\begin{theorem}\label{main.thm}
The strong solution to \eqref{11} exists on a time interval $[T^*,T]$ for every $T>T^*$ assuming $b(t)>0$ for every $t\in [0,T]$.
\end{theorem}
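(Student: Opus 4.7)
The plan is a standard continuation argument based on the a priori estimates established in the preceding subsection. Define
$$
T_{\max} = \sup\bigl\{\,t\in (T^*,T] : \text{a strong solution of \eqref{11} exists on } [T^*,t]\,\bigr\}.
$$
Theorem \ref{outflow.local} ensures $T_{\max}>T^*$, so the set is nonempty. I would argue by contradiction, supposing $T_{\max}<T$, and show that the solution can be extended beyond $T_{\max}$, contradicting its definition.

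The first step is to collect the a priori bounds, all of which hold on $[T^*,T_{\max})$ with constants depending only on the data and on $T$ (not on $T_{\max}$). The energy inequality \eqref{EE} controls $\tilde u$ in $L^\infty(L^2)$, $\tilde u_y$ in $L^2(L^2)$, $\dot b$, and $b-b_b$. Lemma \ref{lower.bound.v} yields a uniform lower bound on $v$; Lemma \ref{lemme.v} gives $v\in L^\infty((T^*,T_{\max}),H^1)$; the final lemma provides $\tilde u\in L^\infty(H^1)\cap L^2(H^2)$. Combined with the assumption $b(t)>0$ on $[0,T]$ (which, together with the energy bound on $b-b_b$, gives $b$ bounded both above and below away from zero), these yield a constant $M'$, independent of $T_2<T_{\max}$, bounding all the relevant norms of $(\tilde u, v, 1/v, b, \dot b)$.

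The second step is to pass to the limit at $T_{\max}$. From $\tilde u\in L^\infty(H^1)$ and the momentum equation one reads off $\tilde u_t\in L^2(L^2)$, hence $\tilde u\in C([T^*,T_{\max}],H^1)$; similarly $v\in L^\infty(H^1)$ with $v_t\in L^\infty(L^2)$ by the continuity equation gives $v\in C([T^*,T_{\max}],H^1)$; the piston ODE provides $b\in C^1([T^*,T_{\max}])$. Thus the triple admits a well-defined trace at $T_{\max}$ which satisfies the hypotheses \eqref{ini.outflow} of Theorem \ref{outflow.local} with constants depending only on $M'$.

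Applying Theorem \ref{outflow.local} with starting time $T_{\max}$, one obtains a strong solution on $[T_{\max},T_{\max}+\delta]$ for some $\delta=\delta(M')>0$. Gluing this to the solution on $[T^*,T_{\max}]$ produces a strong solution on $[T^*,T_{\max}+\delta]$, contradicting the maximality of $T_{\max}$. Hence $T_{\max}=T$. The main obstacle is verifying that the local existence time in Theorem \ref{outflow.local} depends only on the norms appearing in \eqref{ini.outflow} and on a lower bound for $b$, so that iterating the argument does not produce shrinking time steps; this is exactly why the a priori bounds of the present subsection were designed to be independent of $T_2$ as long as $T_2\leq T$ and $b>0$ on $[0,T]$.
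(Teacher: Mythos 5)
Your proposal is correct and follows essentially the same route as the paper: the paper's one-line argument preceding the theorem ("Due to the previous bounds, we are able to apply Theorem \ref{outflow.local} with initial time $T_2$ and we may extend the solution beyond the interval") is precisely the continuation argument you spell out. You have simply fleshed out the details that the paper leaves implicit, including the definition of $T_{\max}$, the verification that the trace at $T_{\max}$ satisfies \eqref{ini.outflow}, and the crucial observation that the local existence time depends only on the size of the data, which is what the uniform a priori estimates guarantee.
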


    \section{Concluding remarks}\label{con.rem}
Below, we give an estimate on the maximal time of existence. As pointed in Theorem \ref{main.thm}, the solution exists as far as $b(t)>0$. To find an estimate, we first specify the estimate given in Lemma \ref{lower.bound.v}. The definition of $M$ together with \eqref{bound.on.M} yields
\begin{multline}\label{lower.precise}
\frac 1{v(t,y)}\leq\frac 1{v(T^*,y)}\cdot \\
\cdot\exp \left(\frac 1\mu\left(\dot b(t) - \dot b(T) + l b(t) - l b(T) + \int_{T^*}^t K(b(s) - b_0)\, {\rm d}s + \sqrt{\eta_{out}}\left(\|\tilde u(t,\cdot)\|_2 + \|\tilde u(T^*,\cdot)\|_2\right)\right)\right)
\end{multline}
We denote the exponent in \eqref{lower.precise} as $G(t)$ so \eqref{lower.precise} can be written easily
$$\frac 1{v(t,y)}\leq \frac 1{v(T^*,y)} e^{G(t)}.$$
According to Lemma \ref{specific.vol} we have (recall $\eta_{out}<0$)
\begin{multline*}
b(t) = \int_{-\eta_{out}}^0 v(t,y)\, {\rm d}y \geq  e^{-G(t)} \eta_{out}(t) \min_{y\in [-\eta_{out}(T^*), 0]} v(T^*,y)\\
\geq C \left[\eta_{out}(T^*) + \int_0^t u_{out}(\tau)\, {\rm d}\tau\left(\min_{\tau} v(\tau,-\eta_{out})\right)^{-1}\right]\geq C\left[\eta_{out}(T^*) + C\int_0^t u_{out}(\tau)\, {\rm d}\tau\right]
\end{multline*}
where $C$ depends on energy. Thus we give the lower estimate on the maximal time of existence -- it is such time $T_3$ that 
$$
C\int_{T^*}^{T_3} u_{out}(t)\, {\rm d}t = -\eta_{out}(T^*)
$$
where $C$ depends only on initial and boundary conditions.\\

\section*{Acknowledgment}
All authors have been supported by Praemium Academiæ of \v S. Ne\v casov\' a. S. Chebbi (first version) was supported by  by the Czech Science Foundation (GA\v CR) through project 19-04243S.  Further, V. M\'acha \and \v S. Ne\v casov\'a 
would like to announce the support from project GA22-01591S in the framework of RVO:67985840.

\end{document}